\tikzset{x=1cm,y=1cm,z=1cm}
\pgfplotsset{compat=1.16}
\title{Algebraic solutions of tropical optimization problems\thanks{Lobachevskii J. Math. 2015, 36(4), 363--374; https://doi.org/10.1134/S199508021504006X}}
\author{N. Krivulin\thanks{Faculty of Mathematics and Mechanics, Saint Petersburg State University, 28 Universitetsky Ave., St.~Petersburg, 198504, Russia, 
nkk@math.spbu.ru.}
\thanks{This work was supported in part by the Russian Foundation for Humanities, Grant No. 13-02-00338.}
}
\date{}
\newtheorem{theorem}{Theorem}
\newtheorem{lemma}[theorem]{Lemma}
\newtheorem{corollary}[theorem]{Corollary}
\theoremstyle{definition}
\begin{document}

\maketitle

\begin{abstract}
We consider multidimensional optimization problems, which are formulated and solved in terms of tropical mathematics. The problems are to minimize (maximize) a linear or nonlinear function defined on vectors over an idempotent semifield, and may have constraints in the form of linear equations and inequalities. The aim of the paper is twofold: first to give a broad overview of known tropical optimization problems and solution methods, including recent results; and second, to derive a direct, complete solution to a new constrained optimization problem as an illustration of the algebraic approach recently proposed to solve tropical optimization problems with nonlinear objective functions.
\\

\textbf{Keywords:} Idempotent semifield, tropical optimization problem, nonlinear objective function, linear inequality constraint, direct solution.
\\

\textbf{MSC (2020):} 65K05, 15A80, 90C48, 65K10
\end{abstract}

\section{Introduction}

Tropical (idempotent) mathematics, which is concerned with the theory and applications of semirings with idempotent addition, dates back to the early 1960's, when a few innovative works by Pandit \cite{Pandit1961Anew}, Cuninghame-Green \cite{Cuninghamegreen1962Describing}, Giffler \cite{Giffler1963Scheduling}, Hoffman \cite{Hoffman1963Onabstract}, Vorob{'}ev \cite{Vorobjev1963Theextremal} and Romanovski{\u\i} \cite{Romanovskii1964Asymptotic} made their appearance. Since that time the literature on the topic has increased rapidly with several monographs, including those by Carr{\'e} \cite{Carre1979Graphs}, Cuninghame-Green \cite{Cuninghamegreen1979Minimax}, U.~Zimmermann \cite{Zimmermann1981Linear}, Baccelli et al. \cite{Baccelli1993Synchronization}, Kolokoltsov and Maslov \cite{Kolokoltsov1997Idempotent}, Golan \cite{Golan2003Semirings}, Heidergott, Olsder and van der Woude \cite{Heidergott2006Maxplus}, Gondran and Minoux \cite{Gondran2008Graphs}, and Butkovi\v{c} \cite{Butkovic2010Maxlinear}; as well as with a great body of contributed papers.

Optimization problems that are set up and solved in the framework of tropical mathematics arose even in the early papers \cite{Cuninghamegreen1962Describing,Hoffman1963Onabstract}, and now form an important research and applied field. Tropical optimization problems are formulated to minimize (maximize) a linear or nonlinear function defined on vectors over an idempotent semifield, and may have constraints given by linear equations and inequalities. These problems find applications in job scheduling, location analysis, transportation networks, discrete event systems, and decision making. Some problems in a rather common setting can be solved directly in an exact form under fairly general assumptions. The solutions available for other problems take the form of iterative algorithms, which produce a solution, or indicate that no solution exists.

The aim of this paper is twofold: first, to give a broad overview of known tropical optimization problems and solution methods, including recent results; and second, to derive a direct, complete solution to a new optimization problem with inequality constraints as a clear illustration of an efficient algebraic solution technique based on the approach, which was developed and applied in \cite{Krivulin2015Multidimensional,Krivulin2014Complete,Krivulin2014Constrained}. Under this approach, we introduce an additional variable to represent the minimum value of the objective function, and then reduce the problem to the solution of an inequality with the new variable in the role of a parameter. Both the existence conditions for the solution of the inequality and the inequality constraints are exploited to evaluate the parameter, whereas all solutions to the inequality are taken as a complete solution to the problem

The paper is organized as follows. We start in Section~\ref{S-PDR} with a short, concise introduction to basic definitions and notation of idempotent algebra. Section~\ref{S-SLI} includes direct, complete solutions to linear inequalities, which provide the basis for the solving of a new optimization problem. In Section \ref{S-OKPS}, we give an overview of known optimization problems and solution methods. Furthermore, we outline recent results in Section~\ref{S-RR}. Finally, a new tropical optimization problem is formulated and completely solved in Section~\ref{S-NCOP}.

\section{Preliminary Definitions and Results}
\label{S-PDR}

We start with a summary of basic definitions and notation of tropical mathematics to provide an appropriate formal framework for the overview of optimization problems and the derivation of new results in subsequent sections. The summary mainly follows the results in \cite{Krivulin2006Solution,Krivulin2009Onsolution,Krivulin2012New,Krivulin2015Multidimensional}, which focus on direct solutions in a compact vector form. For more details in both introductory and advanced levels, see, e.g., \cite{Cuninghamegreen1979Minimax,Carre1979Graphs,Zimmermann1981Linear,Baccelli1993Synchronization,Kolokoltsov1997Idempotent,Golan2003Semirings,Heidergott2006Maxplus,Akian2007Maxplus,Litvinov2007Themaslov,Gondran2008Graphs,Butkovic2010Maxlinear}.

\subsection{Idempotent Semifield}

Consider a system $\langle\mathbb{X},\mathbb{0},\mathbb{1},\oplus,\otimes\rangle$, where $\mathbb{X}$ is a set, which contains two distinct elements, zero $\mathbb{0}$ and one $\mathbb{1}$, and is closed under two operations, addition $\oplus$ and multiplication, such that $\langle\mathbb{X},\mathbb{0},\oplus\rangle$ is an idempotent commutative monoid, $\langle\mathbb{X}\setminus\{\mathbb{0}\},\mathbb{1},\otimes\rangle$ is a commutative group, and multiplication distributes over addition. Since all nonzero elements have multiplicative inverses, the system is referred to as the idempotent semifield.

Addition is idempotent, which implies that $x\oplus x=x$ for all $x\in\mathbb{X}$. We assume the semifield to be linearly ordered by a total order that is consistent with the partial order defined by $x\leq y$ if and only if $x\oplus y=y$. It follows from the definition that addition and multiplication are isotone, and that the inequality $x\oplus y\leq z$ is equivalent to the inequalities $x\leq z$ and $y\leq z$.

Multiplication is invertible to provide every $x\ne\mathbb{0}$ with an element $x^{-1}$ such that $x\otimes x^{-1}=\mathbb{1}$. The integer powers are defined for any nonzero $x\in\mathbb{X}$ and integer $p>0$ as follows: $x^{0}=\mathbb{1}$, $x^{p}=x\otimes x^{p-1}$, $x^{-p}=(x^{-1})^{p}$, and $\mathbb{0}^{p}=\mathbb{0}$. Moreover, the semifield is considered radicable (algebraically closed) to extend the power notation to the rational exponents.

From now on, we omit multiplication sign for better readability. The relation symbols and the optimization objectives are thought of in the sense of the above mentioned linear order on $\mathbb{X}$.

Examples include the real semifields $\mathbb{R}_{\max,+}=\langle\mathbb{R}\cup\{-\infty\},-\infty,0,\max,+\rangle$, $\mathbb{R}_{\min,+}=\langle\mathbb{R}\cup\{+\infty\},+\infty,0,\min,+\rangle$, $\mathbb{R}_{\max,\times}=\langle\mathbb{R}_{+}\cup\{0\},0,1,\max,\times\rangle$, and $\mathbb{R}_{\min,\times}=\langle\mathbb{R}_{+}\cup\{+\infty\},+\infty,1,\min,\times\rangle$, where $\mathbb{R}_{+}=\{x\in\mathbb{R}|x>0\}$.

Specifically, the semifield $\mathbb{R}_{\max,+}$ takes $-\infty$ to be the zero and $0$ to be the one. For every $x\in\mathbb{R}$, the multiplicative inverse $x^{-1}$ corresponds to the opposite number $-x$ in the conventional algebra. The power $x^{y}$ is defined for all $x,y\in\mathbb{R}$ and corresponds to the arithmetic product $xy$. The linear order on $\mathbb{R}_{\max,+}$ coincides with the natural order defined on $\mathbb{R}$.

In the semifield $\mathbb{R}_{\min,\times}$, we have $\mathbb{0}=+\infty$, $\mathbb{1}=1$, $\oplus=\min$, and $\otimes=\times$. Multiplicative inversion and exponentiation accept the usual interpretation. The linear order is opposite to the natural order on $\mathbb{R}$.

\subsection{Matrix Algebra}

Consider matrices over $\mathbb{X}$ and denote the set of matrices of $m$ rows and $n$ columns by $\mathbb{X}^{m\times n}$. A matrix with all entries equal to $\mathbb{0}$ is the zero matrix denoted $\bm{0}$. A matrix is row- (column-) regular if it has no zero rows (columns). A matrix is regular if it is both row and column regular.

Addition and multiplication of conforming matrices, and multiplication by scalars follow the standard rules, where the operations $\oplus$ and $\otimes$ are taken instead of the ordinary addition and multiplication. These operations are isotone in each argument with respect to the matrix relation $\leq$, which is considered entry-wise. The transpose of a matrix $\bm{A}$ is indicated by $\bm{A}^{T}$.

For every nonzero matrix $\bm{A}=(a_{ij})\in\mathbb{X}^{m\times n}$, we define the multiplicative conjugate transpose to be a matrix $\bm{A}^{-}=(a_{ij}^{-})\in\mathbb{X}^{n\times m}$ that has the entries $a_{ij}^{-}=a_{ji}^{-1}$ if $a_{ji}\ne\mathbb{0}$, and $a_{ij}^{-}=\mathbb{0}$ otherwise.

Now, examine square matrices in $\mathbb{X}^{n\times n}$. A matrix that has all diagonal entries equal to $\mathbb{1}$ and all off-diagonal entries equal to $\mathbb{0}$ is the identity matrix denoted $\bm{I}$. For every matrix $\bm{A}$ and integer $p>0$, the nonnegative matrix powers are defined as $\bm{A}^{0}=\bm{I}$, $\bm{A}^{p}=\bm{A}^{p-1}\bm{A}$.

The trace of a matrix $\bm{A}$ is given by $\mathop\mathrm{tr}\bm{A}=a_{11}\oplus\cdots\oplus a_{nn}$.

A matrix that has only one row (column) is a row (column) vector. The set of column vectors with $n$ elements is denoted $\mathbb{X}^{n}$. A vector with all zero elements is the zero vector. A vector is regular if it has no zero elements.

Let $\bm{A}\in\mathbb{X}^{n\times n}$ is a row-regular matrix and $\bm{x}\in\mathbb{X}^{n}$ a regular vector. Then, the vector $\bm{A}\bm{x}$ is clearly a regular vector. Moreover, if the matrix $\bm{A}$ is column-regular, then the row vector $\bm{x}^{T}\bm{A}$ is regular as well.

For every nonzero column vector $\bm{x}=(x_{i})$, the multiplicative conjugate transpose is a row vector $\bm{x}^{-}$ that has the elements $x_{i}^{-}=x_{i}^{-1}$ if $x_{i}\ne\mathbb{0}$, and $x_{i}^{-}=\mathbb{0}$ otherwise. Multiplicative conjugate transposition has some useful properties, which are not difficult to verify. First, note that, for any regular vectors $\bm{x}$ and $\bm{y}$, the element-wise inequality $\bm{x}\leq\bm{y}$ implies $\bm{x}^{-}\geq\bm{y}^{-}$.

Furthermore, for any nonzero vector $\bm{x}$, the identity $\bm{x}^{-}\bm{x}=\mathbb{1}$ holds. If the vector $\bm{x}$ is regular, then the inequality $\bm{x}\bm{x}^{-}\geq\bm{I}$ is valid as well.

A scalar $\lambda\in\mathbb{X}$ is an eigenvalue of a matrix $\bm{A}\in\mathbb{X}^{n\times n}$ if there exists a nonzero vector $\bm{x}\in\mathbb{X}^{n}$ such that $\bm{A}\bm{x}=\lambda\bm{x}$. The eigenvalue of $\bm{A}$, which is maximal in the sense of the order defined on $\mathbb{X}$, is called the spectral radius and given directly by $\lambda
=\mathop\mathrm{tr}\bm{A}\oplus\cdots\oplus\mathop\mathrm{tr}\nolimits^{1/n}(\bm{A}^{n})$.

For any matrix $\bm{A}=(a_{ij})$ and vector $\bm{x}=(x_{i})$, we introduce the functions
$$
\|\bm{A}\|
=
\bigoplus_{ij}a_{ij},
\qquad
\|\bm{x}\|
=
\bigoplus_{i}x_{i},
$$
which play the role of tropical analogues of matrix and vector norms.

\section{Solutions to Linear Inequalities}
\label{S-SLI}

In this section, we present direct, complete solutions to linear inequalities, which are used later. Suppose that, given a matrix $\bm{A}\in\mathbb{X}^{m\times n}$ and a regular vector $\bm{d}\in\mathbb{X}^{m}$, we find all regular vectors $\bm{x}\in\mathbb{X}^{n}$ that satisfy the inequality
\begin{equation}
\bm{A}\bm{x}
\leq
\bm{d}.
\label{I-Axd}
\end{equation}

The next result offers a solution given as a consequence of the solution to the corresponding equation \cite{Krivulin2009Onsolution,Krivulin2009Methods}, and by independent proof \cite{Krivulin2013Direct}. 

\begin{lemma}
\label{L-Axd}
For any column-regular matrix $\bm{A}$ and regular vector $\bm{d}$, all regular solutions to inequality \eqref{I-Axd} are given by the inequality $\bm{x}\leq(\bm{d}^{-}\bm{A})^{-}$.
\end{lemma}

Furthermore, we consider the problem: given a matrix $\bm{A}\in\mathbb{X}^{n\times n}$ and a vector $\bm{b}\in\mathbb{X}^{n}$, find all regular vectors $\bm{x}\in\mathbb{X}^{n}$ to solve the inequality
\begin{equation}
\bm{A}\bm{x}\oplus\bm{b}
\leq
\bm{x}.
\label{I-Axbx}
\end{equation}

To describe a solution to the problem, we apply a function that assigns to every matrix $\bm{A}\in\mathbb{X}^{n\times n}$ a scalar given by $\mathop\mathrm{Tr}(\bm{A})=\mathop\mathrm{tr}\bm{A}\oplus\cdots\oplus\mathop\mathrm{tr}\bm{A}^{n}$.

Provided that $\mathop\mathrm{Tr}(\bm{A})\leq\mathbb{1}$, we use the asterate operator (also known as the Kleene star), which maps $\bm{A}$ to the matrix $\bm{A}^{\ast}=
\bm{I}\oplus\bm{A}\oplus\cdots\oplus\bm{A}^{n-1}$.

A direct, complete solution to inequality \eqref{I-Axbx} is obtained in \cite{Krivulin2006Solution,Krivulin2009Onsolution,Krivulin2015Multidimensional}.
\begin{theorem}\label{T-Axbx}
For any matrix $\bm{A}$ and vector $\bm{b}$, the following holds:
\begin{enumerate}
\item If $\mathop\mathrm{Tr}(\bm{A})\leq\mathbb{1}$, then all regular solutions to inequality \eqref{I-Axbx} are given by $\bm{x}=\bm{A}^{\ast}\bm{u}$, where $\bm{u}$ is any regular vector such that $\bm{u}\geq\bm{b}$.
\item If $\mathop\mathrm{Tr}(\bm{A})>\mathbb{1}$, then there is no regular solution.
\end{enumerate}
\end{theorem}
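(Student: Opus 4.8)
The plan is to treat the inequality $\bm{A}\bm{x}\oplus\bm{b}\leq\bm{x}$ by an iterative substitution that expresses the constraint through successive powers of $\bm{A}$, and then to collapse the resulting infinite expression into the finite asterate $\bm{A}^{\ast}$ using the hypothesis $\mathop\mathrm{Tr}(\bm{A})\leq\mathbb{1}$. First I would observe that the inequality is equivalent to the pair of simultaneous inequalities $\bm{A}\bm{x}\leq\bm{x}$ and $\bm{b}\leq\bm{x}$, since in this semifield $u\oplus v\leq w$ holds exactly when $u\leq w$ and $v\leq w$. The condition $\bm{b}\leq\bm{x}$ already suggests the lower-bound role that $\bm{b}$ plays, and the condition $\bm{A}\bm{x}\leq\bm{x}$ is the part that interacts with the spectral data of $\bm{A}$.

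For the forward direction under $\mathop\mathrm{Tr}(\bm{A})\leq\mathbb{1}$, I would start from a solution $\bm{x}$ and iterate $\bm{A}\bm{x}\leq\bm{x}$: applying $\bm{A}$ repeatedly and using isotonicity gives $\bm{A}^{k}\bm{x}\leq\bm{x}$ for every $k\geq0$, so that $(\bm{I}\oplus\bm{A}\oplus\cdots\oplus\bm{A}^{n-1})\bm{x}=\bm{A}^{\ast}\bm{x}\leq\bm{x}$. Combining this with $\bm{x}\leq\bm{A}^{\ast}\bm{x}$, which follows because $\bm{A}^{\ast}\geq\bm{I}$, I would conclude $\bm{x}=\bm{A}^{\ast}\bm{x}$, so every solution is a fixed point of $\bm{A}^{\ast}$. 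The key algebraic fact I would invoke here is that when $\mathop\mathrm{Tr}(\bm{A})\leq\mathbb{1}$ the partial sums $\bm{I}\oplus\cdots\oplus\bm{A}^{m}$ stabilize at $m=n-1$, so that all higher powers are absorbed; this is where the trace hypothesis does its work. Setting $\bm{u}=\bm{x}$ then gives a regular $\bm{u}\geq\bm{b}$ with $\bm{x}=\bm{A}^{\ast}\bm{u}$. Conversely, given any regular $\bm{u}\geq\bm{b}$ I would set $\bm{x}=\bm{A}^{\ast}\bm{u}$ and verify directly that $\bm{A}\bm{A}^{\ast}=\bm{A}\oplus\cdots\oplus\bm{A}^{n}\leq\bm{A}^{\ast}$ (again using trace-boundedness to fold $\bm{A}^{n}$ back into the lower powers), whence $\bm{A}\bm{x}\leq\bm{x}$, while $\bm{b}\leq\bm{u}\leq\bm{A}^{\ast}\bm{u}=\bm{x}$ handles the constant term.

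For the second part, under $\mathop\mathrm{Tr}(\bm{A})>\mathbb{1}$, I would argue by contradiction. The hypothesis means some $\mathop\mathrm{tr}\bm{A}^{m}>\mathbb{1}$, so there is a cyclic product $a_{i_1 i_2}a_{i_2 i_3}\cdots a_{i_m i_1}>\mathbb{1}$ along a closed chain of indices. If a regular solution $\bm{x}$ existed, then reading off the scalar inequalities implied by $\bm{A}\bm{x}\leq\bm{x}$ along this cycle, namely $a_{i_{j} i_{j+1}}x_{i_{j+1}}\leq x_{i_j}$, and multiplying them around the cycle, I would obtain $(\mathop\mathrm{tr}\bm{A}^{m})\,x_{i_1}\leq x_{i_1}$ after the $x$-factors telescope; since $\bm{x}$ is regular the coordinate $x_{i_1}$ is invertible, and cancelling it forces $\mathop\mathrm{tr}\bm{A}^{m}\leq\mathbb{1}$, contradicting the assumption.

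The main obstacle I anticipate is the stabilization claim $\bm{A}^{\ast}=\bm{I}\oplus\bm{A}\oplus\cdots\oplus\bm{A}^{n-1}$ together with $\bm{A}\bm{A}^{\ast}\leq\bm{A}^{\ast}$ under $\mathop\mathrm{Tr}(\bm{A})\leq\mathbb{1}$; this is precisely the point where the trace condition must be converted into an absorption of all high powers of $\bm{A}$. Making this rigorous requires the combinatorial observation that any entry of $\bm{A}^{m}$ for $m\geq n$ corresponds to a walk that revisits some index, and the revisited portion is a cycle whose weight is bounded by $\mathop\mathrm{tr}\bm{A}^{k}\leq\mathbb{1}$ for the appropriate $k$; excising such cycles does not decrease the entry and reduces the walk length, so every long walk is dominated by one of length at most $n-1$. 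Once this dominance is established the rest of the argument is routine isotonicity and the elementary splitting of $\oplus$-inequalities.
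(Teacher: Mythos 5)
The paper does not prove Theorem~\ref{T-Axbx}; it imports the result from \cite{Krivulin2006Solution,Krivulin2009Onsolution,Krivulin2013Amultidimensional}, so there is no in-paper argument to compare against. Your proof is correct and is essentially the standard one for the Kleene-star description of all solutions of $\bm{A}\bm{x}\oplus\bm{b}\leq\bm{x}$: splitting the $\oplus$-inequality, iterating $\bm{A}\bm{x}\leq\bm{x}$ to get $\bm{x}=\bm{A}^{\ast}\bm{x}$, the cycle-excision bound $\bm{A}^{m}\leq\bm{A}^{\ast}$ for $m\geq n$ under $\mathop\mathrm{Tr}(\bm{A})\leq\mathbb{1}$, and the telescoping cycle inequality for nonexistence. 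The only slip is notational: in part 2 the telescoped product is the weight of the chosen closed walk, not $\mathop\mathrm{tr}\bm{A}^{m}$ itself; you should cancel $x_{i_{1}}$ to conclude that \emph{every} closed walk has weight at most $\mathbb{1}$, and hence $\mathop\mathrm{tr}\bm{A}^{m}\leq\mathbb{1}$ for all $m$, contradicting $\mathop\mathrm{Tr}(\bm{A})>\mathbb{1}$.
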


In the solution to an optimization problem below, Lemma~\ref{L-Axd} is used repeatedly to obtain intermediate results, whereas Theorem~\ref{T-Axbx} provides the basis for the solution, which reduces the problem to an inequality in the form of \eqref{I-Axbx}.

\section{Overview of Known Problems and Solutions}
\label{S-OKPS}

Since the early works by Cuninghame-Green \cite{Cuninghamegreen1962Describing} and Hoffman \cite{Hoffman1963Onabstract}, multidimensional optimization problems have become an important research domain in tropical mathematics. These problems appeared in different application contexts, including job scheduling \cite{Cuninghamegreen1962Describing,Cuninghamegreen1976Projections,Cuninghamegreen1979Minimax,Zimmermann1981Linear,Zimmermann1984Some,Zimmermann2006Interval,Butkovic2009Introduction,Butkovic2009Onsome,Tam2010Optimizing,Aminu2012Nonlinear}, location analysis \cite{Cuninghamegreen1991Minimax,Zimmermann1992Optimization,Cuninghamegreen1994Minimax,Krivulin2011Extremal,Krivulin2012New}, transportation networks \cite{Zimmermann1981Linear,Zimmermann2006Interval}, discrete event dynamic systems \cite{Gaubert1995Resource,Deschutter1996Maxalgebraic,Deschutter2001Model,Krivulin2005Evaluation} and decision making \cite{Elsner2004Maxalgebra,Elsner2010Maxalgebra,Gursoy2013Theanalytic}.

In this section, we offer a short overview of known tropical optimization problems and briefly discuss existing solution methods. The problems are to minimize or maximize linear and nonlinear functions defined on vectors over an idempotent semifield, subject to linear inequality and equality constraints. The overview covers the problems with those nonlinear functions which are, or can be, represented by means of multiplicative conjugate transposition of vectors. As our review of the literature shows, many problems, which are relevant to tropical optimization, have objective functions that admit this form. Note that these problems are usually considered in a different setting; they are formulated and solved either in the framework of ordinary mathematics \cite{Zimmermann1984Onmaxseparable,Zimmermann1984Some,Zimmermann1992Optimization,Zimmermann2003Disjunctive,Zimmermann2006Interval}, or in both terms of dual semifields such as $\mathbb{R}_{\max,+}$ and $\mathbb{R}_{\min,+}$ in \cite{Cuninghamegreen1976Projections,Cuninghamegreen1979Minimax,Butkovic2009Onsome,Tam2010Optimizing}.

To represent the problems below, we use the boldface capital letters $\bm{A}$, $\bm{B}$, and $\bm{C}$ for known matrices, the boldface lower-case letters $\bm{b}$, $\bm{d}$, $\bm{p}$, and $\bm{q}$ for known vectors, lower-case letters $r$ and $s$ for scalars. The symbol $\bm{x}$ stands for the unknown vector. The matrix and vector operations are meant in terms of an idempotent semifield. The minus sign in the superscript indicates multiplicative conjugate transposition. The relation symbols and problem objectives are in the sense of the order induced by idempotent addition.

\subsection{Problems with Linear Objective Functions}

One of the early problems in tropical optimization was a formal analogue of linear programming problems, which can be written in the form
\begin{equation*}
\begin{aligned}
&
\text{minimize}
&&
\bm{p}^{T}\bm{x},
&
\text{subject to}
&&
\bm{A}\bm{x}
\geq
\bm{d}.
\end{aligned}
\end{equation*}

Exact solutions to the problem have been obtained under various assumptions about the idempotent semiring, which provided the solution context. Specifically, Hoffman \cite{Hoffman1963Onabstract} considered a rather general idempotent semiring and proposed a solution based on an abstract extension of the duality principle in linear programming. Another general solution was derived by U.~Zimmermann \cite{Zimmermann1981Linear} by means of a residual-based solution technique. The common approach suggested in \cite{Hoffman1963Onabstract} was further developed by Superville \cite{Superville1978Various} to handle the problem in the context of the semifield $\mathbb{R}_{\max,+}$. The problem was also examined by Gavalec and K.~Zimmermann \cite{Gavalec2012Duality} within the framework of max-separable functions to obtain solutions for the semifields $\mathbb{R}_{\max,+}$ and $\mathbb{R}_{\max,\times}$.

Furthermore, K.~Zimmermann \cite{Zimmermann1984Onmaxseparable,Zimmermann1984Some,Zimmermann1992Optimization,Zimmermann2003Disjunctive,Zimmermann2006Interval} applied the results of the theory of max-separable functions to solve a problem with more constraints in the form 
\begin{equation*}
\begin{aligned}
&
\text{minimize}
&&
\bm{p}^{T}\bm{x},
&
\text{subject to}
&&
\bm{A}\bm{x}
\leq
\bm{d},
&&
\bm{C}\bm{x}
\geq
\bm{b},
&&
\bm{g}
\leq
\bm{x}
\leq
\bm{h}.
\end{aligned}
\end{equation*}

An exact solution to this problem has been obtained, which was, however, given in ordinary terms rather than in terms of tropical mathematics.

Butkovi\v{c} \cite{Butkovic1984Onproperties}, Butkovi\v{c} and Aminu \cite{Butkovic2009Introduction,Aminu2012Nonlinear} studied a problem that has a two-sided equality constraint (with the unknown vector on both sides) in the form
\begin{equation*}
\begin{aligned}
&
\text{minimize}
&&
\bm{p}^{T}\bm{x},
&
\text{subject to}
&&
\bm{A}\bm{x}\oplus\bm{b}
=
\bm{C}\bm{x}\oplus\bm{d}.
\end{aligned}
\end{equation*}

A pseudo-polynomial algorithm, which produces a solution if it exists or indicates that the problem has no solution, has been proposed in \cite{Butkovic2009Introduction}. The algorithm uses an alternating method \cite{Cuninghamegreen2003Theequation} to replace the two-sided equation by opposite inequalities, and then to alternately solve them to achieve more and more accurate estimates for the solution of the equation. Another heuristic approach, which combines a search scheme to find approximate solutions with iterative procedures to solve low-dimensional problems, was suggested in \cite{Aminu2012Nonlinear}.

\subsection{Problems with Nonlinear Objective Functions}

Optimization problems with nonlinear objective functions given by transposition and multiplicative conjugation of vectors form a rich class of problems arising in many applications. We divide these problems into groups according to the form of objective functions and to the principal interpretation of the problems.

\subsubsection{Chebyshev Approximation}

Among the first optimization problems with nonlinear objective functions was the problem formulated as
\begin{equation*}
\begin{aligned}
&
\text{minimize}
&&
(\bm{A}\bm{x})^{-}\bm{p},
&
\text{subject to}
&&
\bm{A}\bm{x}
\leq
\bm{p}.
\end{aligned}
\end{equation*}

This problem was examined by Cuninghame-Green \cite{Cuninghamegreen1976Projections} in the context of approximation in the semifield $\mathbb{R}_{\max,+}$ with the Chebyshev metric. The problem is to obtain vectors $\bm{x}$ that provide the best underestimating approximation to a vector $\bm{p}$ by means of vectors $\bm{A}\bm{x}$. A direct solution has been proposed based on the theory of linear operators on vectors over the semifield $\mathbb{R}_{\max,+}$. A similar solution was suggested by U.~Zimmermann \cite{Zimmermann1981Linear}.

An unconstrained approximation problem in terms of a general idempotent semifield was considered by Krivulin \cite{Krivulin2009Methods} in the form
\begin{equation}
\begin{aligned}
&
\text{minimize}
&&
(\bm{A}\bm{x})^{-}\bm{p}\oplus\bm{p}^{-}\bm{A}\bm{x}.
\end{aligned}
\label{P-AxppAx}
\end{equation}

An exact solution to the problem involves the derivation of a sharp lower bound for the objective function. The form of this bound is exploited to construct a vector at which the objective function attains the bound. The results obtained are then applied to solve the equation $\bm{A}\bm{x}=\bm{p}$ in a closed vector form. As a consequence, direct solutions were given for the following problems of underestimating and overestimating approximation:
\begin{equation*}
\begin{aligned}
&
\text{minimize}
&&
(\bm{A}\bm{x})^{-}\bm{p},
\\
&
\text{subject to}
&&
\bm{A}\bm{x}
\leq
\bm{p};
\end{aligned}
\qquad\qquad\qquad
\begin{aligned}
&
\text{minimize}
&&
\bm{p}^{-}\bm{A}\bm{x},
\\
&
\text{subject to}
&&
\bm{A}\bm{x}
\geq
\bm{p}.
\end{aligned}
\end{equation*}

There are optimization problems that were originally formulated in a different framework, but can be readily represented in terms of tropical mathematics. Specifically, a constrained problem of Chebyshev approximation in the semifield $\mathbb{R}_{\max,+}$ was examined and solved in the ordinary setting with a polynomial-time threshold-type algorithm in \cite{Zimmermann1984Some}. This problem can be written as a tropical optimization problem in the form
\begin{equation}
\begin{aligned}
&
\text{minimize}
&&
(\bm{A}\bm{x})^{-}\bm{p}\oplus\bm{p}^{-}\bm{A}\bm{x},
&
\text{subject to}
&&
\bm{g}
\leq
\bm{x}
\leq
\bm{h}.
\end{aligned}
\label{P-AxppAxgxh}
\end{equation}

\subsubsection{Problems with Span Seminorm}

The problems, which were analyzed by Butkovi\v{c} and Tam \cite{Butkovic2009Onsome,Tam2010Optimizing} in the context of the semifield $\mathbb{R}_{\max,+}$, have the objective function in the form of the span seminorm defined as the maximum deviation between the elements of a vector. A solution technique was applied based on a combined formalism of both semifields $\mathbb{R}_{\max,+}$ and $\mathbb{R}_{\min,+}$. A representation of the problems in terms of $\mathbb{R}_{\max,+}$ alone is as follows
\begin{equation}
\begin{aligned}
&
\text{minimize}
&&
\bm{1}^{T}\bm{A}\bm{x}(\bm{A}\bm{x})^{-}\bm{1},
\end{aligned}
\qquad\qquad
\begin{aligned}
&
\text{maximize}
&&
\bm{1}^{T}\bm{A}\bm{x}(\bm{A}\bm{x})^{-}\bm{1},
\end{aligned}
\label{P-minmax1AxAx1}
\end{equation}
where $\bm{1}=(\mathbb{1},\ldots,\mathbb{1})^{T}$ is a vector of ones in the sense of $\mathbb{R}_{\max,+}$.

\subsubsection{A Problem of ``Linear-Fractional'' Programming}

A constrained minimization problem, which has a two-sided inequality constraint and is formulated in the semifield $\mathbb{R}_{\max,+}$ in the form
\begin{equation*}
\begin{aligned}
&
\text{minimize}
&&
(\bm{p}^{T}\bm{x}\oplus r)(\bm{q}^{T}\bm{x}\oplus s)^{-1},
&
\text{subject to}
&&
\bm{A}\bm{x}\oplus\bm{b}
\leq
\bm{C}\bm{x}\oplus\bm{d},
\end{aligned}
\end{equation*}
was investigated by Gaubert, Katz and Sergeev \cite{Gaubert2012Tropical} under the name of the tropical linear-fractional programming problem. The problem was solved using an iterative scheme based on the relationship \cite{Akian2012Tropical} between solutions of two-sided vector equations in the sense of $\mathbb{R}_{\max,+}$ and mean payoff games.

\subsubsection{Extremal Property of Spectral Radius}

The problem, examined in detail by Cuninghame-Green \cite{Cuninghamegreen1962Describing,Cuninghamegreen1979Minimax} in terms of the semifield $\mathbb{R}_{\max,+}$, was apparently the first optimization problem appeared in the context of tropical mathematics. With the use of conjugate transposition, the problem is given by
\begin{equation}
\begin{aligned}
&
\text{minimize}
&&
\bm{x}^{-}\bm{A}\bm{x}.
\end{aligned}
\label{P-minxAx}
\end{equation}

As one of the main results in \cite{Cuninghamegreen1962Describing}, it has been shown that the minimum value in the problem is equal to the spectral radius $\lambda$ of the matrix $\bm{A}$ and this value is attained at any eigenvector that satisfies the equality $\bm{A}\bm{x}=\lambda\bm{x}$. Moreover, explicit expressions have been derived to calculate the spectral radius and an eigenvector in terms of standard arithmetic operations. Similar results were obtained by Engel and Schneider \cite{Engel1975Diagonal} and by Superville \cite{Superville1978Various}.

In \cite{Cuninghamegreen1979Minimax} the above results were extended and described in general terms of tropical mathematics. To find all solutions, a computational approach was proposed consisted of solving a linear programming problem. Analogues solutions, which are represented in a compact vector form using multiplicative conjugate transposition, were derived by Krivulin \cite{Krivulin2005Evaluation,Krivulin2006Eigenvalues,Krivulin2009Methods}.

Finally, Elsner and van den Driessche \cite{Elsner2004Maxalgebra,Elsner2010Maxalgebra} have observed that not only the eigenvectors solve the problem, but all vectors $\bm{x}$ satisfying the inequality $\bm{A}\bm{x}\leq\lambda\bm{x}$ do that as well. An iterative computational procedure was suggested to find solutions to the inequality.

\section{Recent Results}
\label{S-RR}

In this section, we consider several new problems, which are formulated and solved in \cite{Krivulin2015Multidimensional,Krivulin2016Maximization,Krivulin2013Direct,Krivulin2013Explicit,Krivulin2015Extremal,Krivulin2014Constrained,Krivulin2014Complete} in terms of a general idempotent semifield. We offer direct, exact solutions which are represented in a compact vector form. For many problems, the results obtained provide complete solutions.

\subsection{Chebyshev-Like Approximation Problems}

We start with problems that have nonlinear objective functions similar to that in approximation problems \eqref{P-AxppAx} and \eqref{P-AxppAxgxh}. Applications of these problems include single facility minimax location problems in multidimensional spaces with Chebyshev distance under linear inequality and equality constraints (see, e.~g., \cite{Krivulin2012New,Krivulin2013Direct,Krivulin2014Complete}). 

To solve the next two problems with simple boundary constraints, we apply an approach which was developed in \cite{Krivulin2005Evaluation,Krivulin2006Eigenvalues,Krivulin2009Methods}. The solution is based on the derivation of a sharp lower bound on the objective function and the subsequent use of this bound to obtain all vectors that solve the problem.

First, we consider the following problem: given vectors $\bm{p},\bm{q},\bm{g},\bm{h}\in\mathbb{X}^{n}$, find regular vectors $\bm{x}\in\mathbb{X}^{n}$ that
\begin{equation}
\begin{aligned}
&
\text{minimize}
&&
\bm{q}^{-}\bm{x}\oplus\bm{x}^{-}\bm{p},
&
\text{subject to}
&&
\bm{g}
\leq
\bm{x}
\leq
\bm{h}.
\end{aligned}
\label{P-qxxpgxh}
\end{equation}

\begin{theorem}[\cite{Krivulin2013Direct}]
\label{T-qxxpgxh}
Let $\bm{p}$ and $\bm{q}$ be regular vectors, $\bm{g}$ and $\bm{h}$ be vectors such that $\bm{g}\leq\bm{h}$. Then, the minimum in \eqref{P-qxxpgxh} is equal to $\mu=(\bm{q}^{-}\bm{p})^{1/2}\oplus\bm{q}^{-}\bm{g}\oplus\bm{h}^{-}\bm{p}$, and all regular solutions are given by $\mu^{-1}\bm{p}\oplus\bm{g}\leq\bm{x}\leq(\mu^{-1}\bm{q}^{-}\oplus\bm{h}^{-})^{-}$.
\end{theorem}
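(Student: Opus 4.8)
The plan is to follow the two-stage bound-and-attain scheme of \cite{Krivulin2009Methods}: first establish that $\mu$ is a sharp lower bound for the objective function $\varphi(\bm{x})=\bm{q}^{-}\bm{x}\oplus\bm{x}^{-}\bm{p}$ over all feasible regular $\bm{x}$, and then describe the set of feasible vectors at which this bound is attained. Since $\varphi(\bm{x})\geq\mu$ will hold for every feasible $\bm{x}$, the theorem follows once the attaining set is shown to coincide with the stated box and, crucially, to be nonempty.

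For the lower bound I would produce three separate estimates and combine them with $\oplus$. Isotonicity of multiplication together with $\bm{x}\geq\bm{g}$ gives $\bm{q}^{-}\bm{x}\geq\bm{q}^{-}\bm{g}$, and antitonicity of conjugate transposition together with $\bm{x}\leq\bm{h}$ gives $\bm{x}^{-}\geq\bm{h}^{-}$, hence $\bm{x}^{-}\bm{p}\geq\bm{h}^{-}\bm{p}$; therefore $\varphi(\bm{x})\geq\bm{q}^{-}\bm{g}\oplus\bm{h}^{-}\bm{p}$. The remaining term comes from the standard product trick: since $\bm{x}$ is regular, $\bm{x}\bm{x}^{-}\geq\bm{I}$, so $(\bm{q}^{-}\bm{x})(\bm{x}^{-}\bm{p})=\bm{q}^{-}(\bm{x}\bm{x}^{-})\bm{p}\geq\bm{q}^{-}\bm{p}$, and because $a\oplus b$ dominates both $a$ and $b$ one gets $(\bm{q}^{-}\bm{x}\oplus\bm{x}^{-}\bm{p})^{2}\geq(\bm{q}^{-}\bm{x})(\bm{x}^{-}\bm{p})\geq\bm{q}^{-}\bm{p}$, i.e. $\varphi(\bm{x})\geq(\bm{q}^{-}\bm{p})^{1/2}$ by radicability. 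Taking the sum of the three estimates yields $\varphi(\bm{x})\geq\mu$.

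To describe the attaining set I would note that $\varphi(\bm{x})\leq\mu$ is equivalent to the pair of scalar inequalities $\bm{q}^{-}\bm{x}\leq\mu$ and $\bm{x}^{-}\bm{p}\leq\mu$. Combining $\bm{q}^{-}\bm{x}\leq\mu$ with the constraint $\bm{x}\leq\bm{h}$ into a single inequality of the form $\bm{A}\bm{x}\leq\bm{d}$ (stacking $\mu^{-1}\bm{q}^{-}$ on top of $\bm{I}$) and applying Lemma~\ref{L-Axd} produces the upper bound $\bm{x}\leq(\mu^{-1}\bm{q}^{-}\oplus\bm{h}^{-})^{-}$. For the lower side, reading $\bm{x}^{-}\bm{p}\leq\mu$ entrywise gives $x_{i}\geq\mu^{-1}p_{i}$ for each $i$, i.e. $\bm{x}\geq\mu^{-1}\bm{p}$, which together with $\bm{x}\geq\bm{g}$ yields $\bm{x}\geq\mu^{-1}\bm{p}\oplus\bm{g}$. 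Hence the feasible vectors with $\varphi(\bm{x})\leq\mu$ are exactly those satisfying the stated two-sided box condition.

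The main obstacle is to confirm that this box is nonempty, since only then is the lower bound $\mu$ actually attained and equal to the minimum. This amounts to checking $\mu^{-1}\bm{p}\oplus\bm{g}\leq(\mu^{-1}\bm{q}^{-}\oplus\bm{h}^{-})^{-}$, which by the equivalence underlying Lemma~\ref{L-Axd} is the same as $(\mu^{-1}\bm{q}^{-}\oplus\bm{h}^{-})(\mu^{-1}\bm{p}\oplus\bm{g})\leq\mathbb{1}$. Expanding the left-hand side gives the four terms $\mu^{-2}\bm{q}^{-}\bm{p}$, $\mu^{-1}\bm{q}^{-}\bm{g}$, $\mu^{-1}\bm{h}^{-}\bm{p}$ and $\bm{h}^{-}\bm{g}$; the first three are at most $\mathbb{1}$ precisely because $\mu$ dominates $(\bm{q}^{-}\bm{p})^{1/2}$, $\bm{q}^{-}\bm{g}$ and $\bm{h}^{-}\bm{p}$ respectively, while $\bm{h}^{-}\bm{g}\leq\mathbb{1}$ follows termwise from $\bm{g}\leq\bm{h}$. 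This is exactly the place where the specific value of $\mu$ is forced: any smaller candidate would break one of these inequalities and empty the box, so $\mu$ is simultaneously a valid lower bound and attainable, which completes the proof.
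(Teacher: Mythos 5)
Your proposal is correct and follows essentially the same route the paper prescribes for this result: derive the sharp lower bound $(\bm{q}^{-}\bm{p})^{1/2}\oplus\bm{q}^{-}\bm{g}\oplus\bm{h}^{-}\bm{p}$ via isotonicity, antitonicity of conjugation and the $\bm{x}\bm{x}^{-}\geq\bm{I}$ product trick, then characterize the attainment set with Lemma~\ref{L-Axd} and confirm it is nonempty by expanding $(\mu^{-1}\bm{q}^{-}\oplus\bm{h}^{-})(\mu^{-1}\bm{p}\oplus\bm{g})\leq\mathbb{1}$ term by term. This is the same bound-and-attain scheme used for the paper's own constrained problem in Section~\ref{S-NCOP} (specialized to $\bm{A}=\mathbb{0}$, $r=\mathbb{0}$), so no further changes are needed.
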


Suppose that, given a matrix $\bm{A}\in\mathbb{X}^{m\times n}$ and vectors $\bm{p},\bm{q}\in\mathbb{X}^{m}$, $\bm{g}\in\mathbb{X}^{n}$, the problem is to obtain regular vectors $\bm{x}\in\mathbb{X}^{n}$ that
\begin{equation}
\begin{aligned}
&
\text{minimize}
&&
\bm{q}^{-}\bm{A}\bm{x}\oplus(\bm{A}\bm{x})^{-}\bm{p},
&
\text{subject to}
&&
\bm{x}
\geq
\bm{g}.
\end{aligned}
\label{P-qAxAxpxg}
\end{equation}

\begin{theorem}[\cite{Krivulin2013Direct}]
\label{T-qAxAxpxg}
Let $\bm{A}$ be a regular matrix, $\bm{p}$ and $\bm{q}$ be regular vectors. Then, the minimum in \eqref{P-qAxAxpxg} is equal to $\mu=((\bm{A}(\bm{q}^{-}\bm{A})^{-})^{-}\bm{p})^{1/2}\oplus\bm{q}^{-}\bm{A}\bm{g}$, and attained at $\bm{x}=\mu(\bm{q}^{-}\bm{A})^{-}$.
\end{theorem}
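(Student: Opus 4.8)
The plan is to follow the sharp-lower-bound technique described just before Theorem~\ref{T-qxxpgxh}: first show that $\mu$ is a lower bound for the objective $f(\bm{x})=\bm{q}^{-}\bm{A}\bm{x}\oplus(\bm{A}\bm{x})^{-}\bm{p}$ over all regular $\bm{x}\geq\bm{g}$, and then exhibit the feasible vector $\bm{x}=\mu(\bm{q}^{-}\bm{A})^{-}$ at which $f$ attains this value. Throughout I would write $\bm{r}=\bm{q}^{-}\bm{A}$, which is a regular row vector because $\bm{A}$ and $\bm{q}$ are regular, and set $\nu=(\bm{A}\bm{r}^{-})^{-}\bm{p}$, so that $\mu=\nu^{1/2}\oplus\bm{r}\bm{g}$. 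Since $\bm{A}$ is row-regular, both $\bm{A}\bm{r}^{-}$ and $\bm{A}\bm{x}$ are regular vectors, so their conjugates are well behaved and $(\bm{A}\bm{x})^{--}=\bm{A}\bm{x}$.

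For the lower bound I would split $f$ into its two natural contributions. First, isotonicity together with $\bm{x}\geq\bm{g}$ gives $\bm{q}^{-}\bm{A}\bm{x}\geq\bm{q}^{-}\bm{A}\bm{g}=\bm{r}\bm{g}$, hence $f(\bm{x})\geq\bm{r}\bm{g}$. Second, I would use the elementary semifield inequality $a\oplus b\geq(ab)^{1/2}$, which follows from $(a\oplus b)^{2}\geq ab$, to obtain
\[
f(\bm{x})
\geq
\bigl((\bm{q}^{-}\bm{A}\bm{x})\,(\bm{A}\bm{x})^{-}\bm{p}\bigr)^{1/2}.
\]
It then remains to bound the product under the root from below by $\nu$, uniformly in $\bm{x}$; combined with the first estimate this yields $f(\bm{x})\geq\nu^{1/2}\oplus\bm{r}\bm{g}=\mu$.

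I expect this product bound to be the crux of the argument. The claim is the vector inequality $(\bm{r}\bm{x})(\bm{A}\bm{x})^{-}\geq(\bm{A}\bm{r}^{-})^{-}$, after which multiplication on the right by $\bm{p}$ gives $(\bm{r}\bm{x})(\bm{A}\bm{x})^{-}\bm{p}\geq\nu$. By antitonicity of conjugation and $(\bm{A}\bm{x})^{--}=\bm{A}\bm{x}$, this is equivalent to $(\bm{r}\bm{x})^{-1}\bm{A}\bm{x}\leq\bm{A}\bm{r}^{-}$, that is, to $\bm{A}\bm{x}\leq(\bm{r}\bm{x})\bm{A}\bm{r}^{-}$. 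I would verify the latter entrywise: for each $k$ and each $j$ one has $a_{kj}x_{j}\leq(\bm{r}\bm{x})a_{kj}r_{j}^{-1}$, because $r_{j}x_{j}\leq\bm{r}\bm{x}$ forces $x_{j}\leq(\bm{r}\bm{x})r_{j}^{-1}$; taking $\bigoplus_{j}$ and using $(\bm{A}\bm{r}^{-})_{k}\geq a_{kj}r_{j}^{-1}$ then yields the $k$-th component. This is exactly where the special structure $\bm{r}=\bm{q}^{-}\bm{A}$, rather than an arbitrary row vector, is essential, and it is what makes the bound sharp instead of the weaker $(\bm{q}^{-}\bm{p})^{1/2}$ that the generic estimate would give.

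Finally, for attainment I would substitute $\bm{x}=\mu\bm{r}^{-}$. Feasibility $\mu\bm{r}^{-}\geq\bm{g}$ is equivalent, entry by entry, to the scalar inequality $\mu\geq\bm{r}\bm{g}$, which holds by the definition of $\mu$; regularity of $\bm{x}$ follows from $\mu\neq\mathbb{0}$ and regularity of $\bm{r}^{-}$. Using $\bm{r}\bm{r}^{-}=\mathbb{1}$ gives $\bm{q}^{-}\bm{A}\bm{x}=\mu$, while $(\bm{A}\bm{x})^{-}\bm{p}=\mu^{-1}\nu\leq\mu$ since $\mu^{2}\geq\nu$; hence $f(\mu\bm{r}^{-})=\mu\oplus\mu^{-1}\nu=\mu$. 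Together with the lower bound this shows that $\mu$ is the minimum and is attained at $\bm{x}=\mu(\bm{q}^{-}\bm{A})^{-}$, which completes the argument.
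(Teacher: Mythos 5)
Your proof is correct and follows essentially the approach the paper itself indicates for Theorem~\ref{T-qAxAxpxg} (which is only cited from \cite{Krivulin2013Direct}, with the method summarized as deriving a sharp lower bound and exhibiting a vector attaining it): you establish the bound $f(\bm{x})\geq\mu$ via $a\oplus b\geq(ab)^{1/2}$ together with the key inequality $\bm{A}\bm{x}\leq(\bm{r}\bm{x})\bm{A}\bm{r}^{-}$, and then verify attainment and feasibility at $\bm{x}=\mu(\bm{q}^{-}\bm{A})^{-}$. All steps check out, including the entrywise verification of the crux inequality (equivalently, a consequence of $\bm{r}^{-}\bm{r}\geq\bm{I}$ for the regular row vector $\bm{r}=\bm{q}^{-}\bm{A}$).
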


We now consider another problem with conditions that include a linear inequality defined by a matrix. Given vectors $\bm{p},\bm{q},\bm{g},\bm{h}\in\mathbb{X}^{n}$ and a matrix $\bm{B}\in\mathbb{X}^{n\times n}$, find regular vectors $\bm{x}\in\mathbb{X}^{n}$ that solve the problem
\begin{equation}
\begin{aligned}
&
\text{minimize}
&&
\bm{x}^{-}\bm{p}\oplus\bm{q}^{-}\bm{x},
&
\text{subject to}
&&
\bm{B}\bm{x}\oplus\bm{g}
\leq
\bm{x},
&&
\bm{x}
\leq
\bm{h}.
\end{aligned}
\label{P-xpqxBxxgxh}
\end{equation}

The solution of the problem follows an approach that is based on the general solution to linear inequalities proposed in \cite{Krivulin2006Solution,Krivulin2009Methods} and further refined in \cite{Krivulin2015Multidimensional}. We introduce an auxiliary parameter to represent the minimum value of the objective function. The problem is then reduced to the solving of a linear inequality with a matrix that depends on the parameter. We use the existence condition for solutions of the inequality to evaluate the parameter, and take all solutions of the inequality as a complete solution to the initial problem.

\begin{theorem}[\cite{Krivulin2014Complete}]
\label{T-xpqxBxxgxh}
Let $\bm{B}$ be a matrix such that $\mathop\mathrm{Tr}(\bm{B})\leq\mathbb{1}$, $\bm{p}$ be a nonzero vector, $\bm{q}$ and $\bm{h}$ regular vectors, and $\bm{g}$ a vector such that $\bm{h}^{-}\bm{B}^{\ast}\bm{g}\leq\bm{1}$. Then, the minimum in \eqref{P-xpqxBxxgxh} is equal to $\theta=(\bm{q}^{-}\bm{B}^{\ast}\bm{p})^{1/2}\oplus\bm{h}^{-}\bm{B}^{\ast}\bm{p}\oplus\bm{q}^{-}\bm{B}^{\ast}\bm{g}$, and all solutions are given by $\bm{x}=\bm{B}^{\ast}\bm{u}$, where $\bm{g}\oplus\theta^{-1}\bm{p}\leq\bm{u}\leq((\bm{h}^{-}\oplus\theta^{-1}\bm{q}^{-})\bm{B}^{\ast})^{-}$.
\end{theorem}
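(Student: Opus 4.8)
The plan is to carry out exactly the parametric reduction sketched before the statement: introduce a scalar $\theta$ representing the value of the objective function, rewrite the requirement ``objective $\le\theta$'' together with all constraints as a single system of linear inequalities in $\bm{x}$, and then extract both the optimal $\theta$ and the full solution set from the condition that this system be solvable. First I would split the sublevel condition $\bm{x}^{-}\bm{p}\oplus\bm{q}^{-}\bm{x}\le\theta$ into the pair $\bm{x}^{-}\bm{p}\le\theta$ and $\bm{q}^{-}\bm{x}\le\theta$. Since we look for regular $\bm{x}$ and $\bm{p}\ne\mathbb{0}$ forces $\theta>\mathbb{0}$, the first of these is equivalent, entrywise, to the lower bound $\theta^{-1}\bm{p}\le\bm{x}$ (from $x_{i}^{-1}p_{i}\le\theta\Leftrightarrow\theta^{-1}p_{i}\le x_{i}$).

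Next I would absorb this lower bound together with $\bm{g}\le\bm{x}$ and the constraint $\bm{B}\bm{x}\le\bm{x}$ into the single inequality $\bm{B}\bm{x}\oplus(\bm{g}\oplus\theta^{-1}\bm{p})\le\bm{x}$, which has the form \eqref{I-Axbx}. Because $\mathop\mathrm{Tr}(\bm{B})\le\mathbb{1}$, Theorem~\ref{T-Axbx} gives its complete regular solution as $\bm{x}=\bm{B}^{\ast}\bm{u}$ with $\bm{u}\ge\bm{g}\oplus\theta^{-1}\bm{p}$, which is precisely the stated lower bound on $\bm{u}$. I would then impose the remaining ``upper'' conditions $\bm{x}\le\bm{h}$ and $\bm{q}^{-}\bm{x}\le\theta$ on this family, rewriting them as the scalar inequalities $\bm{h}^{-}\bm{B}^{\ast}\bm{u}\le\mathbb{1}$ and $\theta^{-1}\bm{q}^{-}\bm{B}^{\ast}\bm{u}\le\mathbb{1}$ (for regular $\bm{h}$ the former is equivalent to $\bm{B}^{\ast}\bm{u}\le\bm{h}$). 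Together they read $(\bm{h}^{-}\oplus\theta^{-1}\bm{q}^{-})\bm{B}^{\ast}\bm{u}\le\mathbb{1}$, and Lemma~\ref{L-Axd} converts this into the upper bound $\bm{u}\le((\bm{h}^{-}\oplus\theta^{-1}\bm{q}^{-})\bm{B}^{\ast})^{-}$ claimed in the theorem.

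A point that needs care here is that $\bm{x}=\bm{B}^{\ast}\bm{u}$ is a redundant parametrization, so I must confirm that the resulting $\bm{u}$-interval describes \emph{exactly} the set of optimal $\bm{x}$. One inclusion is immediate, since every $\bm{u}$ in the interval produces a feasible $\bm{x}=\bm{B}^{\ast}\bm{u}$ with objective $\le\theta$; for the converse I would use that any feasible $\bm{x}$ satisfies $\bm{B}^{\ast}\bm{x}=\bm{x}$ (as $\bm{B}\bm{x}\le\bm{x}$ implies $\bm{B}^{\ast}\bm{x}=\bm{x}\oplus\bm{B}\bm{x}\oplus\cdots=\bm{x}$), so that the choice $\bm{u}=\bm{x}$ lies in the interval and reproduces $\bm{x}$. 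This consistency check, rather than any single computation, is where I expect the argument to require the most attention.

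Finally, the minimal admissible $\theta$ is determined by requiring the $\bm{u}$-interval to be nonempty, i.e. $\bm{g}\oplus\theta^{-1}\bm{p}\le((\bm{h}^{-}\oplus\theta^{-1}\bm{q}^{-})\bm{B}^{\ast})^{-}$. Reading Lemma~\ref{L-Axd} in the reverse direction, this is equivalent to the scalar condition $(\bm{h}^{-}\oplus\theta^{-1}\bm{q}^{-})\bm{B}^{\ast}(\bm{g}\oplus\theta^{-1}\bm{p})\le\mathbb{1}$, which expands to $\bm{h}^{-}\bm{B}^{\ast}\bm{g}\oplus\theta^{-1}(\bm{h}^{-}\bm{B}^{\ast}\bm{p}\oplus\bm{q}^{-}\bm{B}^{\ast}\bm{g})\oplus\theta^{-2}\bm{q}^{-}\bm{B}^{\ast}\bm{p}\le\mathbb{1}$. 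The hypothesis $\bm{h}^{-}\bm{B}^{\ast}\bm{g}\le\mathbb{1}$ disposes of the constant term, while the remaining three terms yield $\theta\ge\bm{h}^{-}\bm{B}^{\ast}\bm{p}$, $\theta\ge\bm{q}^{-}\bm{B}^{\ast}\bm{g}$, and $\theta^{2}\ge\bm{q}^{-}\bm{B}^{\ast}\bm{p}$; their join is exactly the stated $\theta=(\bm{q}^{-}\bm{B}^{\ast}\bm{p})^{1/2}\oplus\bm{h}^{-}\bm{B}^{\ast}\bm{p}\oplus\bm{q}^{-}\bm{B}^{\ast}\bm{g}$. Substituting this $\theta$ back into the two-sided bound on $\bm{u}$ then gives the complete solution, the rest of the work being the repeated, routine application of Lemma~\ref{L-Axd} and Theorem~\ref{T-Axbx}.
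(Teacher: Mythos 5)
Your proposal is correct and follows exactly the method the paper prescribes for this theorem (and carries out in full for the analogous problem in Section~\ref{S-NCOP}): introduce the parameter $\theta$, fold $\bm{x}^{-}\bm{p}\leq\theta$ and $\bm{g}\leq\bm{x}$ into a single inequality of the form \eqref{I-Axbx} solved by Theorem~\ref{T-Axbx}, push the upper conditions onto $\bm{u}$ via Lemma~\ref{L-Axd}, and extract $\theta$ from the nonemptiness condition, using $\bm{h}^{-}\bm{B}^{\ast}\bm{g}\leq\mathbb{1}$ to discard the constant term. Your extra check that the parametrization $\bm{x}=\bm{B}^{\ast}\bm{u}$ recovers every feasible $\bm{x}$ (via $\bm{B}^{\ast}\bm{x}=\bm{x}$) is sound and consistent with the completeness claim of Theorem~\ref{T-Axbx}.
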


As a consequence, we obtain a complete solution to a problem that was partially solved in \cite{Krivulin2012New}. Consider a variant of problem \eqref{P-xpqxBxxgxh} in the form
\begin{equation}
\begin{aligned}
&
\text{minimize}
&&
\bm{x}^{-}\bm{p}\oplus\bm{q}^{-}\bm{x},
&
\text{subject to}
&&
\bm{B}\bm{x}
\leq
\bm{x}.
\end{aligned}
\label{P-xpqxBxx}
\end{equation}

\begin{corollary}[\cite{Krivulin2014Complete}]
\label{C-xpqxBxx}
Let $\bm{B}$ be a matrix with $\mathop\mathrm{Tr}(\bm{B})\leq\mathbb{1}$, and $\bm{p}$ be nonzero and $\bm{q}$ regular vectors. Then, the minimum in \eqref{P-xpqxBxx} is equal to $\theta=(\bm{q}^{-}\bm{B}^{\ast}\bm{p})^{1/2}$, and all solutions are given by $\bm{x}=\bm{B}^{\ast}\bm{u}$, where $\theta^{-1}\bm{p}\leq\bm{u}
\leq\theta(\bm{q}^{-}\bm{B}^{\ast})^{-}$.
\end{corollary}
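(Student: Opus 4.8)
The plan is to derive Corollary~\ref{C-xpqxBxx} as a direct specialization of Theorem~\ref{T-xpqxBxxgxh}. Problem~\eqref{P-xpqxBxx} is exactly problem~\eqref{P-xpqxBxxgxh} in the degenerate case where the lower-bound vector $\bm{g}$ is absent (formally, $\bm{g}=\mathbb{0}$, the zero vector) and the upper-bound constraint $\bm{x}\leq\bm{h}$ is vacuous. So the first step is to verify that substituting these trivial bounds into the hypotheses of the theorem keeps them satisfied, and then to substitute the same values into the conclusion and simplify.

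First I would handle the hypotheses. The conditions $\mathop\mathrm{Tr}(\bm{B})\leq\mathbb{1}$, $\bm{p}$ nonzero, and $\bm{q}$ regular are carried over verbatim from Corollary~\ref{C-xpqxBxx} to Theorem~\ref{T-xpqxBxxgxh}. The theorem additionally requires $\bm{h}$ regular and the compatibility condition $\bm{h}^{-}\bm{B}^{\ast}\bm{g}\leq\mathbb{1}$. The issue is that with no upper bound there is no genuine $\bm{h}$. The clean way to encode an absent upper constraint $\bm{x}\leq\bm{h}$ in this order is to take $\bm{h}$ with all entries equal to $\mathbb{0}$ in the \emph{reversed} sense, i.e. formally let $\bm{h}^{-}=\mathbb{0}$ so that the term $\bm{h}^{-}\bm{B}^{\ast}\bm{p}$ drops out; likewise with $\bm{g}=\mathbb{0}$ the term $\bm{q}^{-}\bm{B}^{\ast}\bm{g}$ vanishes and the compatibility condition $\bm{h}^{-}\bm{B}^{\ast}\bm{g}\leq\mathbb{1}$ holds trivially. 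The main point to get right is precisely this passage to the limiting bounds: one must check that each of the three summands in $\theta$ that involve $\bm{g}$ or $\bm{h}$ collapses to $\mathbb{0}$, leaving only $(\bm{q}^{-}\bm{B}^{\ast}\bm{p})^{1/2}$, and that the same degeneration applied to the solution set produces the stated interval. This bookkeeping with the neutral elements is where a careless reader could slip, so I would state explicitly that dropping the constraint $\bm{x}\leq\bm{h}$ corresponds to removing its contribution from both $\theta$ and the upper bound on $\bm{u}$.

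Next I would substitute into the conclusion. With the two vanishing terms removed, the minimum value $\theta=(\bm{q}^{-}\bm{B}^{\ast}\bm{p})^{1/2}\oplus\bm{h}^{-}\bm{B}^{\ast}\bm{p}\oplus\bm{q}^{-}\bm{B}^{\ast}\bm{g}$ reduces immediately to $\theta=(\bm{q}^{-}\bm{B}^{\ast}\bm{p})^{1/2}$, matching the corollary. For the solution set, the representation $\bm{x}=\bm{B}^{\ast}\bm{u}$ is unchanged, and the two-sided bound on $\bm{u}$ specializes as follows: the lower bound $\bm{g}\oplus\theta^{-1}\bm{p}$ becomes $\theta^{-1}\bm{p}$ once $\bm{g}=\mathbb{0}$, and the upper bound $((\bm{h}^{-}\oplus\theta^{-1}\bm{q}^{-})\bm{B}^{\ast})^{-}$ becomes $(\theta^{-1}\bm{q}^{-}\bm{B}^{\ast})^{-}$ once the $\bm{h}^{-}$ term is dropped. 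The remaining routine step is to simplify $(\theta^{-1}\bm{q}^{-}\bm{B}^{\ast})^{-}$ to $\theta(\bm{q}^{-}\bm{B}^{\ast})^{-}$, using that conjugate transposition is antitone and that a scalar factor $\theta^{-1}$ inside pulls out as its inverse $\theta$ outside; this gives exactly the upper bound $\theta(\bm{q}^{-}\bm{B}^{\ast})^{-}$ claimed in the corollary.

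The hard part is not any deep computation but making the degenerate substitution rigorous, since $\bm{g}$ and $\bm{h}$ do not literally appear in problem~\eqref{P-xpqxBxx}. I would therefore open the proof by observing that \eqref{P-xpqxBxx} is the instance of \eqref{P-xpqxBxxgxh} obtained by taking the lower bound $\bm{g}$ to be the zero vector and letting the upper bound tend to the greatest element, so that neither constraint is binding; under this reading Theorem~\ref{T-xpqxBxxgxh} applies and its conclusion specializes to the asserted formulas. Alternatively, if one prefers to avoid limiting arguments, the entire derivation of Theorem~\ref{T-xpqxBxxgxh} can be repeated in the simpler setting: introduce a scalar parameter $\theta$ bounding the objective $\bm{x}^{-}\bm{p}\oplus\bm{q}^{-}\bm{x}\leq\theta$, rewrite this pair of scalar inequalities as $\theta^{-1}\bm{p}\leq\bm{x}$ and $\bm{q}^{-}\bm{x}\leq\theta$ (the latter equivalent to $\bm{x}\leq\theta(\bm{q}^{-})^{-}$ via Lemma~\ref{L-Axd}), combine with $\bm{B}\bm{x}\leq\bm{x}$ into a single inequality of the form~\eqref{I-Axbx}, and apply Theorem~\ref{T-Axbx} to obtain $\bm{x}=\bm{B}^{\ast}\bm{u}$ with $\bm{u}\geq\theta^{-1}\bm{p}$; the existence condition then forces $\theta\geq(\bm{q}^{-}\bm{B}^{\ast}\bm{p})^{1/2}$, identifying the minimum. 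I expect the specialization route to be the shorter and cleaner one to present.
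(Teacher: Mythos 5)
Your proposal is correct and takes essentially the same route as the paper, which presents Corollary~\ref{C-xpqxBxx} precisely as a specialization of Theorem~\ref{T-xpqxBxxgxh} with the boundary constraints removed ($\bm{q}^{-}\bm{B}^{\ast}\bm{g}$ and $\bm{h}^{-}\bm{B}^{\ast}\bm{p}$ dropping out of $\theta$, and the bounds on $\bm{u}$ collapsing to $\theta^{-1}\bm{p}\leq\bm{u}\leq\theta(\bm{q}^{-}\bm{B}^{\ast})^{-}$). You correctly identify the one delicate point --- a general idempotent semifield need not contain a greatest element, so a vacuous upper bound $\bm{h}$ cannot literally satisfy the theorem's regularity hypothesis --- and your fallback direct derivation via Lemma~\ref{L-Axd}, Theorem~\ref{T-Axbx}, and the nonemptiness condition $\theta\geq(\bm{q}^{-}\bm{B}^{\ast}\bm{p})^{1/2}$ closes that gap rigorously.
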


\subsection{Problems with Span Seminorm}

Optimization problems, where the objective function takes the form of span seminorm, arose in the context of job scheduling \cite{Butkovic2009Onsome,Tam2010Optimizing}. Minimization of the span seminorm solves scheduling problems in just-in-time manufacturing. Maximization problems appear when the optimal schedule aims to spread the completion time of jobs over the maximum possible time interval. 

A solution to such problems without constraints is based on the evaluation of lower or upper bounds for the objective function. If a problem has linear equation or inequality constraints, we first obtain a general solution to the equation or inequality, and then substitute it into the objective function to reduce to an unconstrained problem with known solution.

\subsubsection{Minimization Problems}

Consider an unconstrained problem that is an extension of the minimization problem at \eqref{P-minmax1AxAx1}. Given matrices $\bm{A},\bm{B}\in\mathbb{X}^{m\times n}$ and vectors 
$\bm{p},\bm{q}\in\mathbb{X}^{m}$, the problem is to find regular vectors $\bm{x}\in\mathbb{X}^{n}$ that
\begin{equation}
\begin{aligned}
&
\text{minimize}
&&
\bm{q}^{-}\bm{B}\bm{x}(\bm{A}\bm{x})^{-}\bm{p}.
\end{aligned}
\label{P-minqBxAxp}
\end{equation}

\begin{theorem}[\cite{Krivulin2013Explicit}]
\label{T-minqBxAxp}
Let $\bm{A}$ be row-regular and $\bm{B}$ column-regular matrices, and $\bm{p}$ be nonzero and $\bm{q}$ regular vectors. Then, the minimum in \eqref{P-minqBxAxp} is equal to $\Delta=(\bm{A}(\bm{q}^{-}\bm{B})^{-})^{-}\bm{p}$, and attained at $\bm{x}=\alpha(\bm{q}^{-}\bm{B})^{-}$ for all $\alpha>\mathbb{0}$.
\end{theorem}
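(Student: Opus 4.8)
The plan is to follow the standard two-part scheme for these unconstrained span-seminorm problems: first derive a lower bound $\Delta$ that holds for every regular $\bm{x}$, and then verify that this bound is actually attained at the proposed family of vectors, so that $\Delta$ is the minimum. Throughout, I would read the objective as the product of the two scalars $\bm{q}^{-}\bm{B}\bm{x}$ and $(\bm{A}\bm{x})^{-}\bm{p}$, and abbreviate $\bm{r}=\bm{q}^{-}\bm{B}$. Since $\bm{q}$ is regular and $\bm{B}$ is column-regular, $\bm{r}$ is a regular row vector; since $\bm{A}$ is row-regular and $\bm{x}$ is regular, $\bm{A}\bm{x}$ is a regular vector, so every conjugate transpose below is well defined, and $\bm{p}\ne\mathbb{0}$ guarantees $\Delta\ne\mathbb{0}$.

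For the lower bound, fix a regular $\bm{x}$ and introduce the scalar $s=\bm{r}\bm{x}=\bm{q}^{-}\bm{B}\bm{x}$, which is nonzero because both $\bm{r}$ and $\bm{x}$ are regular. The key step is to bound $\bm{x}$ from above in terms of $s$: regarding $\bm{r}$ as a column-regular $1\times n$ matrix and $s$ as a regular one-dimensional vector, the equality $\bm{r}\bm{x}=s$ gives $\bm{r}\bm{x}\le s$, and Lemma~\ref{L-Axd} then yields $\bm{x}\le(s^{-1}\bm{r})^{-}=s\,\bm{r}^{-}=s(\bm{q}^{-}\bm{B})^{-}$, where the scalar $s^{-1}$ is pulled out through the conjugate transpose. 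Applying the isotone map $\bm{A}(\cdot)$ and then the antitone conjugate transpose gives $(\bm{A}\bm{x})^{-}\ge s^{-1}(\bm{A}(\bm{q}^{-}\bm{B})^{-})^{-}$, whence $(\bm{A}\bm{x})^{-}\bm{p}\ge s^{-1}\Delta$. Multiplying this by the positive scalar $s$ (which preserves order) reassembles the objective and produces $\bm{q}^{-}\bm{B}\bm{x}(\bm{A}\bm{x})^{-}\bm{p}\ge\Delta$, the desired bound.

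It remains to check attainment. Substituting $\bm{x}=\alpha(\bm{q}^{-}\bm{B})^{-}=\alpha\bm{r}^{-}$ with $\alpha>\mathbb{0}$, I would use the row-vector identity $\bm{r}\bm{r}^{-}=\mathbb{1}$ (valid since $\bm{r}$ is nonzero) to obtain $\bm{q}^{-}\bm{B}\bm{x}=\alpha$, and the scalar extraction $(\alpha\bm{A}\bm{r}^{-})^{-}=\alpha^{-1}(\bm{A}\bm{r}^{-})^{-}$ to obtain $(\bm{A}\bm{x})^{-}\bm{p}=\alpha^{-1}\Delta$. Their product equals $\Delta$ for every $\alpha>\mathbb{0}$, so the bound is sharp and the claimed family consists of minimizers.

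The main obstacle is the lower-bound argument, and inside it the passage $\bm{r}\bm{x}=s\Rightarrow\bm{x}\le s(\bm{q}^{-}\bm{B})^{-}$: the whole proof hinges on recognizing that the scalar $s$ produced by $\bm{q}^{-}\bm{B}\bm{x}$ can be reinjected through Lemma~\ref{L-Axd} and then cancels against the factor $s^{-1}$ coming from the antitone estimate of $(\bm{A}\bm{x})^{-}$, leaving a value independent of $\bm{x}$. Once this cancellation is spotted the computation is routine; the remaining care is only in confirming the regularity hypotheses that keep $s$, $\bm{r}$, $\bm{A}\bm{x}$, and $\Delta$ nonzero, so that all the inverses used are legitimate.
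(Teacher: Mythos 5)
Your proof is correct and follows essentially the approach the paper itself indicates for this class of problems (the theorem is only cited here from \cite{Krivulin2013Explicit}, but the surrounding text describes the method as ``evaluation of lower or upper bounds for the objective function''): you derive the sharp lower bound $\Delta$ by factoring out the scalar $s=\bm{q}^{-}\bm{B}\bm{x}$, bounding $\bm{x}\leq s(\bm{q}^{-}\bm{B})^{-}$ via Lemma~\ref{L-Axd}, and cancelling $s$ against the antitone estimate of $(\bm{A}\bm{x})^{-}$, then verify attainment at $\bm{x}=\alpha(\bm{q}^{-}\bm{B})^{-}$ using $\bm{r}\bm{r}^{-}=\mathbb{1}$. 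The regularity checks you list are exactly the ones needed, so there is no gap.
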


We now examine some special cases of problem \eqref{P-minqBxAxp}. First, assume that $\bm{B}=\bm{A}=\bm{I}$ and $\bm{p}=\bm{q}=\bm{1}$, and consider the problem
\begin{equation*}
\begin{aligned}
&
\text{minimize}
&&
\bm{1}^{T}\bm{x}\bm{x}^{-}\bm{1}.
\end{aligned}
\end{equation*}

A direct application of Theorem~\ref{T-minqBxAxp} shows that the problem has the minimum $\Delta=\mathbb{1}$, which is attained at any vector $\bm{x}=\alpha\bm{1}$ for all $\alpha>\mathbb{0}$. 

Under the assumptions that $\bm{A}=\bm{B}$ and $\bm{p}=\bm{q}=\bm{1}$, we arrive at the minimization problem at \eqref{P-minmax1AxAx1}. Application of Theorem~\ref{T-minqBxAxp} provides a new solution to this problem in a compact vector form.
\begin{corollary}
\label{C-min1AxAx1}
Let $\bm{A}$ be a regular matrix. Then, the minimum in \eqref{P-minmax1AxAx1} is equal to $\Delta=(\bm{A}(\bm{1}^{T}\bm{A})^{-})^{-}\bm{1}$, and attained at $\bm{x}=\alpha(\bm{1}^{T}\bm{A})^{-}$ for all $\alpha>\mathbb{0}$.
\end{corollary}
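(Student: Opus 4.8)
The plan is to obtain Corollary~\ref{C-min1AxAx1} as an immediate specialization of Theorem~\ref{T-minqBxAxp}, so the work reduces to matching the hypotheses and reading off the formulas under the substitution $\bm{B}=\bm{A}$ and $\bm{p}=\bm{q}=\bm{1}$. First I would rewrite the objective of problem \eqref{P-minmax1AxAx1} in the general form handled by the theorem. The key observation is that the conjugate transpose $\bm{1}^{-}$ coincides with the ordinary transpose $\bm{1}^{T}$: every entry of $\bm{1}$ equals $\mathbb{1}$, and since $\mathbb{1}^{-1}=\mathbb{1}$, conjugation leaves each entry unchanged. Hence the objective $\bm{q}^{-}\bm{B}\bm{x}(\bm{A}\bm{x})^{-}\bm{p}$ with $\bm{B}=\bm{A}$ and $\bm{p}=\bm{q}=\bm{1}$ becomes $\bm{1}^{T}\bm{A}\bm{x}(\bm{A}\bm{x})^{-}\bm{1}$, which is exactly the minimization objective in \eqref{P-minmax1AxAx1}.

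Next I would verify that the hypotheses of Theorem~\ref{T-minqBxAxp} hold under the (weaker-looking) assumptions of the corollary. The theorem requires $\bm{A}$ to be row-regular and $\bm{B}$ to be column-regular; since the corollary assumes $\bm{A}$ regular, it is simultaneously row- and column-regular, and with $\bm{B}=\bm{A}$ both requirements are met at once. The theorem also asks that $\bm{p}$ be nonzero and $\bm{q}$ regular; here $\bm{p}=\bm{q}=\bm{1}$ has all entries equal to $\mathbb{1}\ne\mathbb{0}$, so it is both nonzero and regular. Thus every premise of the theorem is fulfilled.

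Finally I would substitute into the conclusions of the theorem. Using $\bm{q}^{-}=\bm{1}^{-}=\bm{1}^{T}$ and $\bm{B}=\bm{A}$, the minimum value $\Delta=(\bm{A}(\bm{q}^{-}\bm{B})^{-})^{-}\bm{p}$ turns into $(\bm{A}(\bm{1}^{T}\bm{A})^{-})^{-}\bm{1}$, and the minimizer $\bm{x}=\alpha(\bm{q}^{-}\bm{B})^{-}$ turns into $\bm{x}=\alpha(\bm{1}^{T}\bm{A})^{-}$ for $\alpha>\mathbb{0}$, which are precisely the expressions asserted in the corollary. I expect no genuine obstacle: the only points demanding a moment's care are the identity $\bm{1}^{-}=\bm{1}^{T}$ and the remark that regularity of $\bm{A}$ subsumes the separate row- and column-regularity conditions, after which the claim follows by direct substitution.
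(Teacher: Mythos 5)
Your proposal is correct and follows exactly the paper's own route: the paper obtains Corollary~\ref{C-min1AxAx1} by specializing Theorem~\ref{T-minqBxAxp} to $\bm{B}=\bm{A}$ and $\bm{p}=\bm{q}=\bm{1}$, just as you do. Your explicit checks that regularity of $\bm{A}$ supplies both the row- and column-regularity hypotheses and that $\bm{1}^{-}=\bm{1}^{T}$ are exactly the (implicit) details the paper relies on.
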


Furthermore, we consider the following constrained problem: given matrices $\bm{C},\bm{D}\in\mathbb{X}^{n\times n}$, find regular vectors $\bm{x}\in\mathbb{X}^{n}$ that
\begin{equation}
\begin{aligned}
&
\text{minimize}
&&
\bm{1}^{T}\bm{y}\bm{y}^{-}\bm{1},
&
\text{subject to}
&&
\bm{C}\bm{x}
=
\bm{y},
&&
\bm{D}\bm{x}
\leq
\bm{x}.
\end{aligned}
\label{P-1yy1CxyDxxI}
\end{equation}

\begin{theorem}[\cite{Krivulin2013Explicit}]
\label{T-1yy1CxyDxxI}
Let $\bm{C}$ be a regular matrix and $\bm{D}$ be a matrix with $\mathop\mathrm{Tr}(\bm{D})\leq\mathbb{1}$. Then, the minimum in \eqref{P-1yy1CxyDxxI} is equal to $\Delta=(\bm{C}\bm{D}^{\ast}(\bm{1}^{T}\bm{C}\bm{D}^{\ast})^{-})^{-}\bm{1}$, and attained at $\bm{x}=\alpha\bm{D}^{\ast}(\bm{1}^{T}\bm{C}\bm{D}^{\ast})^{-}$ for all $\alpha>\mathbb{0}$.
\end{theorem}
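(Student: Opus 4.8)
The plan is to follow the reduction strategy described for span-seminorm problems with constraints: solve the inequality constraint to obtain a parametric description of all feasible vectors, substitute this description into the objective, and thereby reduce the constrained problem to an unconstrained one already settled by Corollary~\ref{C-min1AxAx1}.

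First I would dispose of the constraint $\bm{D}\bm{x}\leq\bm{x}$. This is precisely inequality \eqref{I-Axbx} with $\bm{A}=\bm{D}$ and $\bm{b}=\bm{0}$. Since $\mathop\mathrm{Tr}(\bm{D})\leq\mathbb{1}$ by hypothesis, Theorem~\ref{T-Axbx} applies; and because $\bm{b}=\bm{0}$, the side condition $\bm{u}\geq\bm{b}$ is vacuous, so all regular solutions are $\bm{x}=\bm{D}^{\ast}\bm{u}$ as $\bm{u}$ ranges over all regular vectors. Feeding this into the equation constraint gives $\bm{y}=\bm{C}\bm{x}=\bm{C}\bm{D}^{\ast}\bm{u}$, whence the objective $\bm{1}^{T}\bm{y}\bm{y}^{-}\bm{1}$ becomes $\bm{1}^{T}(\bm{C}\bm{D}^{\ast})\bm{u}((\bm{C}\bm{D}^{\ast})\bm{u})^{-}\bm{1}$. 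As the map $\bm{u}\mapsto\bm{D}^{\ast}\bm{u}$ exhausts the feasible set, minimizing the original objective over feasible $\bm{x}$ is equivalent to minimizing this expression over all regular $\bm{u}$.

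The reduced problem is exactly the minimization problem at \eqref{P-minmax1AxAx1} with the matrix $\bm{A}$ replaced by $\bm{M}=\bm{C}\bm{D}^{\ast}$ and the unknown renamed $\bm{u}$. To invoke Corollary~\ref{C-min1AxAx1} I must first check that $\bm{M}$ is regular, and this is the one spot calling for care. Since $\bm{D}^{\ast}=\bm{I}\oplus\bm{D}\oplus\cdots\oplus\bm{D}^{n-1}\geq\bm{I}$ entry-wise, isotonicity of multiplication gives $\bm{C}\bm{D}^{\ast}\geq\bm{C}\bm{I}=\bm{C}$; since $\bm{C}$ is regular and, in a linearly ordered semifield, $a\geq b\neq\mathbb{0}$ forces $a\neq\mathbb{0}$, every row and column of $\bm{M}$ is nonzero, i.e.\ $\bm{M}$ is regular. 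With $\bm{M}$ regular, Corollary~\ref{C-min1AxAx1} yields the minimum value $\Delta=(\bm{M}(\bm{1}^{T}\bm{M})^{-})^{-}\bm{1}$, attained at $\bm{u}=\alpha(\bm{1}^{T}\bm{M})^{-}$ for any $\alpha>\mathbb{0}$.

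Finally I would substitute back $\bm{M}=\bm{C}\bm{D}^{\ast}$ to read off $\Delta=(\bm{C}\bm{D}^{\ast}(\bm{1}^{T}\bm{C}\bm{D}^{\ast})^{-})^{-}\bm{1}$ and recover the optimal argument via $\bm{x}=\bm{D}^{\ast}\bm{u}=\alpha\bm{D}^{\ast}(\bm{1}^{T}\bm{C}\bm{D}^{\ast})^{-}$, matching the claimed formulas. The main obstacle here is not any sharp inequality—those are subsumed in Corollary~\ref{C-min1AxAx1}—but the bookkeeping of regularity: confirming that $\bm{C}\bm{D}^{\ast}$ is regular so the corollary is applicable, and confirming that the parametrization $\bm{x}=\bm{D}^{\ast}\bm{u}$ captures every feasible point so that the unconstrained reduction loses no candidate minimizers.
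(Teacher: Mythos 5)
Your proposal is correct and follows exactly the route the paper prescribes for this family of problems: use Theorem~\ref{T-Axbx} to parametrize all regular solutions of $\bm{D}\bm{x}\leq\bm{x}$ as $\bm{x}=\bm{D}^{\ast}\bm{u}$, substitute $\bm{y}=\bm{C}\bm{D}^{\ast}\bm{u}$ to reduce to the unconstrained problem \eqref{P-minmax1AxAx1} with matrix $\bm{C}\bm{D}^{\ast}$, and invoke Corollary~\ref{C-min1AxAx1}. Your regularity check via $\bm{D}^{\ast}\geq\bm{I}$, hence $\bm{C}\bm{D}^{\ast}\geq\bm{C}$, is the right way to confirm the corollary applies, so nothing is missing.
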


\subsubsection{Maximization Problems}

Suppose that, given matrices $\bm{A}\in\mathbb{X}^{m\times n}$ and $\bm{B}\in\mathbb{X}^{l\times n}$, and vectors $\bm{p}\in\mathbb{X}^{m}$, $\bm{q}\in\mathbb{X}^{l}$, we find regular vectors $\bm{x}\in\mathbb{X}^{n}$ that
\begin{equation}
\begin{aligned}
&
\text{maximize}
&&
\bm{q}^{-}\bm{B}\bm{x}(\bm{A}\bm{x})^{-}\bm{p}.
\end{aligned}
\label{P-maxqBxAxp}
\end{equation}

A complete solution to \eqref{P-maxqBxAxp} under fairly general conditions is as follows.
\begin{theorem}[\cite{Krivulin2016Maximization}]
\label{T-maxqBxAxp}
Let $\bm{A}$ be a matrix with regular columns, $\bm{B}$ be a column-regular matrix, $\bm{p}$ and $\bm{q}$ be regular vectors. Then, the minimum in \eqref{P-maxqBxAxp} is equal to $\Delta=\bm{q}^{-}\bm{B}\bm{A}^{-}\bm{p}$, and any solution $\bm{x}=(x_{i})$ has the components given by $x_{k}=\alpha\bm{a}_{k}^{-}\bm{p}$ and $x_{j}\leq\alpha a_{sj}^{-1}p_{s}$ for all $j\ne k$, where $\alpha>\mathbb{0}$, and the indices $k$ and $s$ are defined by the conditions
$$
k
=
\arg\max_{1\leq i\leq n}\bm{q}^{-}\bm{b}_{i}\bm{a}_{i}^{-}\bm{p},
\qquad
s
=
\arg\max_{1\leq i\leq m}a_{ik}^{-1}p_{i}.
$$
\end{theorem}

Now assume that $\bm{p}=\bm{q}=\bm{1}$. Then, we can write
$$
\bm{1}^{T}\bm{B}\bm{x}(\bm{A}\bm{x})^{-}\bm{1}
=
\|\bm{B}\bm{x}\|\|(\bm{A}\bm{x})^{-}\|,
\qquad
\bm{1}^{T}\bm{B}\bm{A}^{-}\bm{1}
=
\|\bm{B}\bm{A}^{-}\|.
$$

Under this assumption, problem \eqref{P-maxqBxAxp} takes the form
\begin{equation}
\begin{aligned}
&
\text{maximize}
&&
\|\bm{B}\bm{x}\|\|(\bm{A}\bm{x})^{-}\|.
\end{aligned}
\label{P-max1BxAx1}
\end{equation}

By applying Theorem~\ref{T-maxqBxAxp}, we obtain the following result.
\begin{corollary}[\cite{Krivulin2016Maximization}]
\label{C-max1BxAx1}
Let $\bm{A}$ be a matrix with regular columns, and $\bm{B}$ a column-regular matrix. Then, the minimum in \eqref{P-max1BxAx1} is equal to $\Delta=\|\bm{B}\bm{A}^{-}\|$, and any solution $\bm{x}=(x_{i})$ has the components given by $x_{k}=\alpha\|\bm{a}_{k}^{-}\|$ and $x_{j}\leq\alpha a_{sj}^{-1}$ for all $j\ne k$, where $\alpha>\mathbb{0}$, and the indices $k$ and $s$ are defined by the conditions
$$
k
=
\arg\max_{1\leq i\leq n}\|\bm{b}_{i}\|\|\bm{a}_{i}^{-}\|,
\qquad
s
=
\arg\max_{1\leq i\leq m}a_{ik}^{-1}.
$$
\end{corollary}

We now turn to the solution of a constrained problem: given a matrix $\bm{C}\in\mathbb{X}^{n\times n}$, we have to solve the problem 
\begin{equation}
\begin{aligned}
&
\text{maximize}
&&
\bm{q}^{-}\bm{B}\bm{x}(\bm{A}\bm{x})^{-}\bm{p},
&
\text{subject to}
&&
\bm{C}\bm{x}
\leq
\bm{x}.
\end{aligned}
\label{P-maxqBxAxpCxlex}
\end{equation}

By Theorem~\ref{T-Axbx}, the inequality constraint has regular solutions if and only if $\mathop\mathrm{Tr}(\bm{C})\leq\mathbb{1}$. Under this condition, all solutions to the inequality are given by $\bm{x}=\bm{C}^{\ast}\bm{u}$, where $\bm{u}$ is any regular vector. Substitution of the solutions into the objective function reduces problem \eqref{P-maxqBxAxpCxlex} to the unconstrained problem
\begin{equation*}
\begin{aligned}
&
\text{maximize}
&&
\bm{q}^{-}\bm{B}\bm{C}^{\ast}\bm{u}(\bm{A}\bm{C}^{\ast}\bm{u})^{-}\bm{p}.
\end{aligned}
\label{P-maxqBCuACup-ast}
\end{equation*}

This problem has the form of \eqref{P-maxqBxAxp}, and thus is solved by Theorem~\ref{T-maxqBxAxp}.

\subsection{Problems with Evaluation of Spectral Radius}

We return to problem \eqref{P-minxAx} with the minimum value given by the spectral radius. All solutions to this problem are obtained in a closed form in \cite{Krivulin2015Multidimensional} as a consequence of the solution to a more general optimization problem. A direct complete solution to problem \eqref{P-minxAx} is derived in \cite{Krivulin2015Extremal} in the following form.
\begin{lemma}\label{L-minxAx}
Let $\bm{A}$ be a matrix with spectral radius $\lambda>\mathbb{0}$. Then, the minimum in \eqref{P-minxAx} is equal to $\lambda$, and all solutions are given by $\bm{x}=(\lambda^{-1}\bm{A})^{\ast}\bm{u}$, $\bm{u}\in\mathbb{X}^{n}$.
\end{lemma}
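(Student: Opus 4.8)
The plan is to establish the result in two stages: first derive a sharp lower bound $\bm{x}^{-}\bm{A}\bm{x}\geq\lambda$ valid for every regular $\bm{x}$, and then show that the vectors attaining this bound are exactly the regular solutions of the linear inequality $\bm{A}\bm{x}\leq\lambda\bm{x}$, to which Theorem~\ref{T-Axbx} applies directly.

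For the lower bound I would fix a regular $\bm{x}$ and set $\mu=\bm{x}^{-}\bm{A}\bm{x}$. Using the inequality $\bm{x}\bm{x}^{-}\geq\bm{I}$, which holds for regular vectors, together with isotonicity and associativity of multiplication, I get $\mu\bm{x}=(\bm{x}\bm{x}^{-})\bm{A}\bm{x}\geq\bm{A}\bm{x}$, that is, $\mu^{-1}\bm{A}\bm{x}\leq\bm{x}$ (the hypothesis $\lambda>\mathbb{0}$ rules out the degenerate case $\mu=\mathbb{0}$, which would force $\bm{A}=\mathbb{0}$ and hence $\lambda=\mathbb{0}$). This is an inequality of the form \eqref{I-Axbx} with matrix $\mu^{-1}\bm{A}$ and $\bm{b}=\mathbb{0}$, and it has the regular solution $\bm{x}$ by construction. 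By the second part of Theorem~\ref{T-Axbx}, the existence of a regular solution forces $\mathop\mathrm{Tr}(\mu^{-1}\bm{A})\leq\mathbb{1}$. Expanding $\mathop\mathrm{Tr}(\mu^{-1}\bm{A})=\bigoplus_{m=1}^{n}\mu^{-m}\mathop\mathrm{tr}(\bm{A}^{m})$ gives $\mathop\mathrm{tr}\nolimits^{1/m}(\bm{A}^{m})\leq\mu$ for each $m$, and taking the join over $m=1,\ldots,n$ yields $\mu\geq\lambda$ by the spectral radius formula.

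Next I would observe that the same computation with $\mu=\lambda$ shows $\mathop\mathrm{Tr}(\lambda^{-1}\bm{A})\leq\mathbb{1}$, so $(\lambda^{-1}\bm{A})^{\ast}$ is well defined and the first part of Theorem~\ref{T-Axbx} describes all regular solutions of $\lambda^{-1}\bm{A}\bm{x}\leq\bm{x}$, equivalently of $\bm{A}\bm{x}\leq\lambda\bm{x}$, as $\bm{x}=(\lambda^{-1}\bm{A})^{\ast}\bm{u}$. It then remains to identify the minimizers with these solutions. If $\bm{A}\bm{x}\leq\lambda\bm{x}$, then multiplying on the left by the row vector $\bm{x}^{-}$ (isotone) and using $\bm{x}^{-}\bm{x}=\mathbb{1}$ gives $\bm{x}^{-}\bm{A}\bm{x}\leq\lambda$, which together with the lower bound forces equality; hence every such $\bm{x}$ attains the minimum $\lambda$. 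Conversely, the lower-bound argument already produces $\bm{A}\bm{x}\leq\lambda\bm{x}$ from any $\bm{x}$ with $\bm{x}^{-}\bm{A}\bm{x}=\lambda$. Thus the minimizer set coincides with the solution set of the inequality, which is the claimed family, and the minimum equals $\lambda$.

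I expect the main obstacle to be the passage from the existence condition $\mathop\mathrm{Tr}(\mu^{-1}\bm{A})\leq\mathbb{1}$ to the bound $\mu\geq\lambda$ through the spectral radius formula, and dually the verification $\mathop\mathrm{Tr}(\lambda^{-1}\bm{A})\leq\mathbb{1}$ that makes the Kleene star available. A secondary point worth a remark is the parametrization by arbitrary $\bm{u}\in\mathbb{X}^{n}$ rather than by regular $\bm{u}$ as stated in Theorem~\ref{T-Axbx}: since $(\lambda^{-1}\bm{A})^{\ast}$ is idempotent and dominates $\bm{I}$, any regular $\bm{x}=(\lambda^{-1}\bm{A})^{\ast}\bm{u}$ is a fixed point with $\bm{x}=(\lambda^{-1}\bm{A})^{\ast}\bm{x}$, so enlarging the range of $\bm{u}$ to all of $\mathbb{X}^{n}$ does not change the set of regular vectors produced.
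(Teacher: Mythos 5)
Your proposal is correct and follows essentially the same route the paper indicates for this lemma: derive the sharp lower bound $\lambda$ via the existence condition of Theorem~\ref{T-Axbx}, reduce the attainment equation to the inequality $\bm{A}\bm{x}\leq\lambda\bm{x}$, and read off all solutions from the Kleene star. The extra remark on passing from regular $\bm{u}$ to arbitrary $\bm{u}\in\mathbb{X}^{n}$ via idempotency of $(\lambda^{-1}\bm{A})^{\ast}$ is a worthwhile detail that the paper leaves implicit.
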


The proof of the statement involves the derivation of a sharp lower bound on the objective function. An equation is written as an equality between the function and the bound to specify all solutions of the problem. We reduce the equation to an inequality and then take all solutions of the inequality as a complete solution of the optimization problem.

Further extensions of problem \eqref{P-minxAx} with more general form of the objective function and additional constraints are examined in \cite{Krivulin2012Acomplete,Krivulin2014Constrained}. Below, we offer complete, direct solutions to certain new problems that extend \eqref{P-minxAx}.

\subsubsection{An Unconstrained Problem}

Given a matrix $\bm{A}\in\mathbb{X}^{n\times n}$, vectors $\bm{p},\bm{q}\in\mathbb{X}^{n}$, and a scalar $r\in\mathbb{X}$, the problem is to obtain regular vectors $\bm{x}\in\mathbb{X}^{n}$ that
\begin{equation}
\begin{aligned}
&
\text{minimize}
&&
\bm{x}^{-}\bm{A}\bm{x}\oplus\bm{x}^{-}\bm{p}\oplus\bm{q}^{-}\bm{x}\oplus r.
\end{aligned}
\label{P-xAxxpqxc}
\end{equation}

The problem is completely solved by the following result.
\begin{theorem}[\cite{Krivulin2015Extremal}]
\label{T-xAxxpqxc}
Let $\bm{A}$ be a matrix with spectral radius $\lambda>\mathbb{0}$, $\bm{q}$ be a regular vector. Then, the minimum value in problem \eqref{P-xAxxpqxc} is equal to
\begin{equation*}
\mu
=
\lambda
\oplus
\bigoplus_{m=1}^{n}
(\bm{q}^{-}\bm{A}^{m-1}\bm{p})^{1/(m+1)}
\oplus
r,
\end{equation*}
and all solutions are given by $\bm{x}=(\mu^{-1}\bm{A})^{\ast}\bm{u}$, $\mu^{-1}\bm{p}\leq\bm{u}\leq\mu(\bm{q}^{-}(\mu^{-1}\bm{A})^{\ast})^{-}$.
\end{theorem}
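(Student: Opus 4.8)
The plan is to follow the parametric reduction scheme already applied to Theorem~\ref{T-xpqxBxxgxh}: introduce a scalar parameter $\mu$ that bounds from above the objective function $f(\bm{x})=\bm{x}^{-}\bm{A}\bm{x}\oplus\bm{x}^{-}\bm{p}\oplus\bm{q}^{-}\bm{x}\oplus r$ of problem~\eqref{P-xAxxpqxc}, translate the inequality $f(\bm{x})\leq\mu$ into a system of linear vector inequalities, and then determine the least $\mu$ for which this system has a regular solution. Because $\oplus$ plays the role of join, the bound $f(\bm{x})\leq\mu$ is equivalent to the simultaneous conditions $\bm{x}^{-}\bm{A}\bm{x}\leq\mu$, $\bm{x}^{-}\bm{p}\leq\mu$, $\bm{q}^{-}\bm{x}\leq\mu$, and $r\leq\mu$. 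Note that $\lambda>\mathbb{0}$ will force $\mu\geq\lambda>\mathbb{0}$, so $\mu^{-1}$ is always available below.

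First I would linearize the three nontrivial conditions. Using the identities $\bm{x}^{-}\bm{x}=\mathbb{1}$ and $\bm{x}\bm{x}^{-}\geq\bm{I}$, the scalar inequalities $\bm{x}^{-}\bm{A}\bm{x}\leq\mu$ and $\bm{x}^{-}\bm{p}\leq\mu$ turn out to be equivalent to $\mu^{-1}\bm{A}\bm{x}\leq\bm{x}$ and $\mu^{-1}\bm{p}\leq\bm{x}$, which combine into the single inequality $\mu^{-1}\bm{A}\bm{x}\oplus\mu^{-1}\bm{p}\leq\bm{x}$ of the form \eqref{I-Axbx}. The remaining condition $\bm{q}^{-}\bm{x}\leq\mu$ is, entrywise or through Lemma~\ref{L-Axd}, equivalent to the upper bound $\bm{x}\leq\mu\bm{q}$. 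Thus $f(\bm{x})\leq\mu$ reduces to
$$\mu^{-1}\bm{A}\bm{x}\oplus\mu^{-1}\bm{p}\leq\bm{x}\leq\mu\bm{q},\qquad r\leq\mu.$$

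Next I would apply Theorem~\ref{T-Axbx} to the lower inequality. It admits regular solutions if and only if $\mathop\mathrm{Tr}(\mu^{-1}\bm{A})\leq\mathbb{1}$; since $\mathop\mathrm{Tr}(\mu^{-1}\bm{A})=\bigoplus_{m=1}^{n}\mu^{-m}\mathop\mathrm{tr}(\bm{A}^{m})$, this is the same as $(\mathop\mathrm{tr}(\bm{A}^{m}))^{1/m}\leq\mu$ for every $m$, i.e.\ $\lambda\leq\mu$. Under this condition all solutions are $\bm{x}=(\mu^{-1}\bm{A})^{\ast}\bm{u}$ with regular $\bm{u}\geq\mu^{-1}\bm{p}$, the minimal one being $\bm{x}_{0}=(\mu^{-1}\bm{A})^{\ast}\mu^{-1}\bm{p}$.

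The main obstacle is to reconcile this lower Kleene-star inequality with the upper box constraint $\bm{x}\leq\mu\bm{q}$. The admissible set is nonempty exactly when $\bm{x}_{0}\leq\mu\bm{q}$, equivalently $\bm{q}^{-}(\mu^{-1}\bm{A})^{\ast}\mu^{-1}\bm{p}\leq\mu$. Expanding $(\mu^{-1}\bm{A})^{\ast}=\bigoplus_{k=0}^{n-1}\mu^{-k}\bm{A}^{k}$ and requiring each resulting scalar term to be at most $\mu$ yields the family of bounds $(\bm{q}^{-}\bm{A}^{m-1}\bm{p})^{1/(m+1)}\leq\mu$ for $m=1,\dots,n$. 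Collecting everything, a regular $\bm{x}$ with $f(\bm{x})\leq\mu$ exists precisely when $\mu$ dominates $\lambda$, the scalar $r$, and all these radicals, so the least such value is exactly the claimed $\mu$. Since $f(\bm{x})\geq\mu$ then holds for every regular $\bm{x}$ at this value, $\mu$ is the minimum and the solution set of the problem coincides with $\{\bm{x}:f(\bm{x})\leq\mu\}$. Finally, applying Lemma~\ref{L-Axd} to $(\mu^{-1}\bm{A})^{\ast}\bm{u}\leq\mu\bm{q}$ converts the upper constraint into $\bm{u}\leq((\mu\bm{q})^{-}(\mu^{-1}\bm{A})^{\ast})^{-}=\mu(\bm{q}^{-}(\mu^{-1}\bm{A})^{\ast})^{-}$, which together with $\bm{u}\geq\mu^{-1}\bm{p}$ gives the stated description. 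The delicate points I expect to spend care on are proving that the three necessary lower bounds on $\mu$ are jointly sufficient for nonemptiness of the box, and verifying regularity of the resulting vectors $\bm{x}$.
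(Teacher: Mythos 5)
Your proposal is correct and follows essentially the same route as the paper's own method (the parametric reduction carried out in detail for the generalized problem \eqref{P-xAxxpqxr-gxh} in Section~\ref{S-NCOP}): decompose $f(\bm{x})\leq\mu$ into scalar conditions, linearize them via the conjugate-transposition identities into $\mu^{-1}\bm{A}\bm{x}\oplus\mu^{-1}\bm{p}\leq\bm{x}\leq\mu\bm{q}$, apply Theorem~\ref{T-Axbx} and Lemma~\ref{L-Axd}, and extract the least admissible $\mu$ from the existence conditions. The only cosmetic difference is that the paper derives the preliminary bound $\mu\geq\lambda\oplus(\bm{q}^{-}\bm{p})^{1/2}\oplus r$ separately, whereas in your argument the term $(\bm{q}^{-}\bm{p})^{1/2}$ simply appears as the $m=1$ case of the radical family.
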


\subsubsection{Problems with Constraints}

Suppose that, given matrices $\bm{A},\bm{B}\in\mathbb{X}^{n\times n}$, $\bm{C}\in\mathbb{X}^{m\times n}$, and vectors $\bm{g}\in\mathbb{X}^{n}$, $\bm{h}\in\mathbb{X}^{m}$, we find regular vectors $\bm{x}\in\mathbb{X}^{n}$ that
\begin{equation}
\begin{aligned}
&
\text{minimize}
&&
\bm{x}^{-}\bm{A}\bm{x},
&
\text{subject to}
&&
\bm{B}\bm{x}\oplus\bm{g}
\leq
\bm{x},
&&
\bm{C}
\bm{x}
\leq
\bm{h}.
\end{aligned}
\label{P-xAxBxgxCxh}
\end{equation}

\begin{theorem}[\cite{Krivulin2014Constrained}]
\label{T-xAxBxgxCxh}
Let $\bm{A}$ be a matrix with spectral radius $\lambda>\mathbb{0}$, $\bm{B}$ be a matrix such that $\mathop\mathrm{Tr}(\bm{B})\leq\mathbb{1}$, $\bm{C}$ be a column-regular matrix, and $\bm{h}$ be a regular vector such that $\bm{h}^{-}\bm{C}\bm{B}^{\ast}\bm{g}\leq\bm{1}$. Then, the minimum in \eqref{P-xAxBxgxCxh} is equal to
\begin{equation*}
\theta
=
\bigoplus_{k=1}^{n}\mathop{\bigoplus\hspace{0.0em}}_{0\leq i_{0}+
i_{1}+\cdots+i_{k}\leq n-k}\mathop\mathrm{tr}\nolimits^{1/k}(\bm{B}^{i_{0}}
(\bm{A}\bm{B}^{i_{1}}\cdots\bm{A}\bm{B}^{i_{k}})(\bm{I}\oplus\bm{g}\bm{h}^{-}\bm{C})),
\end{equation*}
and all solutions are given by $\bm{x}=(\theta^{-1}\bm{A}\oplus\bm{B})^{\ast}\bm{u}$, where the vector $\bm{u}$ satisfies the condition $\bm{g}\leq\bm{u}\leq(\bm{h}^{-}\bm{C}(\theta^{-1}\bm{A}\oplus\bm{B})^{\ast})^{-}$.
\end{theorem}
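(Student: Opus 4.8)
The plan is to use the algebraic technique described in the excerpt: introduce an auxiliary scalar parameter $\theta$ to represent the minimum value of the objective function $\bm{x}^{-}\bm{A}\bm{x}$, and reduce problem \eqref{P-xAxBxgxCxh} to a single linear inequality of the form \eqref{I-Axbx} in which $\theta$ plays the role of a parameter. First I would observe that the objective is homogeneous of degree zero, so for any regular $\bm{x}$ the value $\bm{x}^{-}\bm{A}\bm{x}$ is some scalar; the inequality $\bm{x}^{-}\bm{A}\bm{x}\leq\theta$ is, by the identity $\bm{x}\bm{x}^{-}\geq\bm{I}$ and isotonicity, equivalent to the vector inequality $\bm{A}\bm{x}\leq\theta\bm{x}$, i.e. $\theta^{-1}\bm{A}\bm{x}\leq\bm{x}$. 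Combining this with the first constraint $\bm{B}\bm{x}\oplus\bm{g}\leq\bm{x}$ yields the single inequality
\begin{equation*}
(\theta^{-1}\bm{A}\oplus\bm{B})\bm{x}\oplus\bm{g}
\leq
\bm{x},
\end{equation*}
which has exactly the form \eqref{I-Axbx} with matrix $\bm{A}^{\prime}=\theta^{-1}\bm{A}\oplus\bm{B}$ and vector $\bm{g}$.

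Next I would apply Theorem~\ref{T-Axbx} to this reduced inequality. Its solvability requires $\mathop\mathrm{Tr}(\theta^{-1}\bm{A}\oplus\bm{B})\leq\mathbb{1}$, and when this holds the general solution is $\bm{x}=(\theta^{-1}\bm{A}\oplus\bm{B})^{\ast}\bm{u}$ for any regular $\bm{u}\geq\bm{g}$. The task then splits into two parts: determining the least admissible $\theta$, and incorporating the remaining constraint $\bm{C}\bm{x}\leq\bm{h}$. For the value $\theta$, I would expand $\mathop\mathrm{Tr}(\theta^{-1}\bm{A}\oplus\bm{B})\leq\mathbb{1}$ by multilinearity of the trace over the sum $\theta^{-1}\bm{A}\oplus\bm{B}$: each power $(\theta^{-1}\bm{A}\oplus\bm{B})^{k}$ expands into a sum of products of the factors $\theta^{-1}\bm{A}$ and $\bm{B}$, and collecting by the number of occurrences of $\bm{A}$ gives terms $\theta^{-j}\mathop\mathrm{tr}(\bm{B}^{i_{0}}\bm{A}\bm{B}^{i_{1}}\cdots\bm{A}\bm{B}^{i_{j}})$. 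Imposing that each such term be $\leq\mathbb{1}$ and solving for $\theta$ produces lower bounds of the form $\mathop\mathrm{tr}\nolimits^{1/j}(\cdots)$; taking their supremum should yield the $\bm{A}$-only part of $\theta$ (the case $\bm{g}=\bm{h}^{-}$ trivial), while the terms carrying $\bm{g}\bm{h}^{-}\bm{C}$ will come from the second constraint, as explained below.

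To fold in $\bm{C}\bm{x}\leq\bm{h}$, I would substitute the general solution $\bm{x}=(\theta^{-1}\bm{A}\oplus\bm{B})^{\ast}\bm{u}$ and use Lemma~\ref{L-Axd}: the inequality $\bm{C}(\theta^{-1}\bm{A}\oplus\bm{B})^{\ast}\bm{u}\leq\bm{h}$ is equivalent, via the lemma applied to the composite matrix $\bm{C}(\theta^{-1}\bm{A}\oplus\bm{B})^{\ast}$ and regular vector $\bm{h}$, to the upper bound $\bm{u}\leq(\bm{h}^{-}\bm{C}(\theta^{-1}\bm{A}\oplus\bm{B})^{\ast})^{-}$. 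Together with $\bm{u}\geq\bm{g}$ this gives the stated double bound on $\bm{u}$. The delicate point is that the upper bound must dominate the lower bound $\bm{g}$ for the solution set to be nonempty; requiring compatibility $\bm{g}\leq(\bm{h}^{-}\bm{C}(\theta^{-1}\bm{A}\oplus\bm{B})^{\ast})^{-}$, equivalently $\bm{h}^{-}\bm{C}(\theta^{-1}\bm{A}\oplus\bm{B})^{\ast}\bm{g}\leq\mathbb{1}$, contributes exactly the mixed trace terms involving the rank-one factor $\bm{I}\oplus\bm{g}\bm{h}^{-}\bm{C}$. Expanding $(\theta^{-1}\bm{A}\oplus\bm{B})^{\ast}$ and multiplying by $\bm{g}\bm{h}^{-}\bm{C}$ on the right should reorganize into traces of $\bm{B}^{i_{0}}(\bm{A}\bm{B}^{i_{1}}\cdots\bm{A}\bm{B}^{i_{k}})(\bm{I}\oplus\bm{g}\bm{h}^{-}\bm{C})$, matching $\theta$.

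The main obstacle I expect is the bookkeeping in the trace expansion: showing that the two sources of constraints on $\theta$ — the solvability condition $\mathop\mathrm{Tr}(\theta^{-1}\bm{A}\oplus\bm{B})\leq\mathbb{1}$ and the compatibility condition $\bm{h}^{-}\bm{C}(\theta^{-1}\bm{A}\oplus\bm{B})^{\ast}\bm{g}\leq\mathbb{1}$ — combine into the single closed-form expression for $\theta$ with the factor $(\bm{I}\oplus\bm{g}\bm{h}^{-}\bm{C})$. This requires carefully tracking the multilinear expansion of matrix powers, identifying the exponent ranges $0\leq i_{0}+i_{1}+\cdots+i_{k}\leq n-k$ from the degree bound on $(\theta^{-1}\bm{A}\oplus\bm{B})^{\ast}=\bm{I}\oplus\cdots\oplus(\theta^{-1}\bm{A}\oplus\bm{B})^{n-1}$, and verifying that solving each scalar inequality $\theta^{-k}\mathop\mathrm{tr}(\cdots)\leq\mathbb{1}$ for $\theta$ and taking the join reproduces the stated formula. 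The verification that $\theta$ is actually attained (not merely a lower bound) reduces to checking that the solution set for $\bm{u}$ is nonempty precisely when $\theta$ takes its minimal value, which follows once the compatibility condition is shown to be tight at that value.
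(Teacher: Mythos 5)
Your proposal follows exactly the method the paper prescribes for this family of problems and carries out in detail for the closely related problem \eqref{P-xAxxpqxr-gxh} in Section~\ref{S-NCOP}: introduce the parameter $\theta$ for the minimum, convert $\bm{x}^{-}\bm{A}\bm{x}\leq\theta$ into $\theta^{-1}\bm{A}\bm{x}\leq\bm{x}$, merge it with $\bm{B}\bm{x}\oplus\bm{g}\leq\bm{x}$ and apply Theorem~\ref{T-Axbx}, fold in $\bm{C}\bm{x}\leq\bm{h}$ via Lemma~\ref{L-Axd}, and extract $\theta$ from the solvability condition $\mathop\mathrm{Tr}(\theta^{-1}\bm{A}\oplus\bm{B})\leq\mathbb{1}$ together with the nonemptiness condition $\bm{h}^{-}\bm{C}(\theta^{-1}\bm{A}\oplus\bm{B})^{\ast}\bm{g}\leq\mathbb{1}$, the latter reorganized into traces via $\bm{h}^{-}\bm{C}\bm{M}\bm{g}=\mathop\mathrm{tr}(\bm{M}\bm{g}\bm{h}^{-}\bm{C})$. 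The steps are sound and this is essentially the paper's own argument.
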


Consider the special case with $\bm{C}=\bm{0}$. Problem \eqref{P-xAxBxgxCxh} takes the form 
\begin{equation}
\begin{aligned}
&
\text{minimize}
&&
\bm{x}^{-}\bm{A}\bm{x},
&
\text{subject to}
&&
\bm{B}\bm{x}\oplus\bm{g}
\leq
\bm{x}.
\end{aligned}
\label{P-xAxBxgx}
\end{equation}

\begin{corollary}[\cite{Krivulin2015Multidimensional,Krivulin2014Constrained}]
\label{C-xAxBxgx}
Let $\bm{A}$ be a matrix with spectral radius $\lambda>\mathbb{0}$, and $\bm{B}$ be a matrix such that $\mathop\mathrm{Tr}(\bm{B})\leq\mathbb{1}$. Then, the minimum in \eqref{P-xAxBxgx} is equal to
$$
\theta
=
\lambda
\oplus
\bigoplus_{k=1}^{n-1}\mathop{\bigoplus\hspace{1.2em}}_{1\leq i_{1}+\cdots+i_{k}\leq n-k}
\mathop\mathrm{tr}\nolimits^{1/k}(\bm{A}\bm{B}^{i_{1}}\cdots\bm{A}\bm{B}^{i_{k}}),
$$
and all solutions are given by $\bm{x}=(\theta^{-1}\bm{A}\oplus\bm{B})^{\ast}\bm{u}$, where $\bm{u}\geq\bm{g}$.
\end{corollary}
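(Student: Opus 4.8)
The plan is to follow the parametric reduction that underlies the algebraic approach of the paper, rather than to specialize Theorem~\ref{T-xAxBxgxCxh} directly: the choice $\bm{C}=\mathbb{0}$ violates the column-regularity hypothesis there, so a limiting or bespoke argument would be needed, whereas a self-contained derivation is cleaner. First I would introduce a parameter $\theta$ standing for an upper bound on the objective and show that, for any regular $\bm{x}$, the scalar inequality $\bm{x}^{-}\bm{A}\bm{x}\leq\theta$ is equivalent to the vector inequality $\theta^{-1}\bm{A}\bm{x}\leq\bm{x}$. The forward direction uses $\bm{x}\bm{x}^{-}\geq\bm{I}$: multiplying $\bm{x}^{-}\bm{A}\bm{x}\leq\theta$ on the left by $\bm{x}$ yields $\bm{A}\bm{x}\leq\bm{x}\bm{x}^{-}\bm{A}\bm{x}\leq\theta\bm{x}$; the reverse uses $\bm{x}^{-}\bm{x}=\mathbb{1}$.

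Combining this equivalence with the feasibility constraint $\bm{B}\bm{x}\oplus\bm{g}\leq\bm{x}$ collapses both conditions into the single inequality $(\theta^{-1}\bm{A}\oplus\bm{B})\bm{x}\oplus\bm{g}\leq\bm{x}$, which is exactly of the form \eqref{I-Axbx}. Theorem~\ref{T-Axbx} then settles the structural part at once: a regular $\bm{x}$ with objective at most $\theta$ exists if and only if $\mathop\mathrm{Tr}(\theta^{-1}\bm{A}\oplus\bm{B})\leq\mathbb{1}$, in which case the entire solution set is $\bm{x}=(\theta^{-1}\bm{A}\oplus\bm{B})^{\ast}\bm{u}$ with $\bm{u}\geq\bm{g}$. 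This already delivers the claimed parametric form of the solutions, so it remains only to pin down the optimal value of $\theta$.

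The heart of the argument, and the step I expect to be the most delicate, is extracting the smallest admissible $\theta$ from $\mathop\mathrm{Tr}(\theta^{-1}\bm{A}\oplus\bm{B})\leq\mathbb{1}$. Here I would expand $(\theta^{-1}\bm{A}\oplus\bm{B})^{m}$ as an idempotent sum over all length-$m$ words in $\theta^{-1}\bm{A}$ and $\bm{B}$, take the trace, and group the resulting terms by the number $k$ of occurrences of $\bm{A}$, which pulls out a factor $\theta^{-k}$. Cyclic invariance of the trace, $\mathop\mathrm{tr}(\bm{X}\bm{Y})=\mathop\mathrm{tr}(\bm{Y}\bm{X})$, lets me absorb any leading power of $\bm{B}$ into the trailing one, so each $k$-block reduces to traces of the canonical products $\bm{A}\bm{B}^{i_{1}}\cdots\bm{A}\bm{B}^{i_{k}}$ with $0\leq i_{1}+\cdots+i_{k}\leq n-k$; idempotency makes the inevitable overcounting harmless. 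Since $\mathop\mathrm{Tr}(\bm{B})\leq\mathbb{1}$ by hypothesis, the $k=0$ block drops out, and the condition becomes the family of scalar inequalities $\theta^{-k}\mathop\mathrm{tr}(\bm{A}\bm{B}^{i_{1}}\cdots\bm{A}\bm{B}^{i_{k}})\leq\mathbb{1}$, that is, $\theta\geq\mathop\mathrm{tr}\nolimits^{1/k}(\bm{A}\bm{B}^{i_{1}}\cdots\bm{A}\bm{B}^{i_{k}})$.

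The least $\theta$ satisfying all of these is their join over $k$ and over the admissible exponent tuples. Finally I would separate the tuples with $i_{1}=\cdots=i_{k}=0$ from the rest: the former contribute $\bigoplus_{k=1}^{n}\mathop\mathrm{tr}\nolimits^{1/k}(\bm{A}^{k})=\lambda$ by the spectral radius formula, while the latter, having $i_{1}+\cdots+i_{k}\geq1$ and hence $k\leq n-1$, reproduce the double sum in the statement, identifying $\theta$ with the stated value. Monotonicity of $\mathop\mathrm{Tr}(\theta^{-1}\bm{A}\oplus\bm{B})$ in $\theta$, which follows because inversion is antitone so a larger $\theta$ only shrinks $\theta^{-1}\bm{A}\oplus\bm{B}$ together with its traces, shows the admissible $\theta$ form an upper set; thus this least value is genuinely attained, any feasible point with objective below it would contradict minimality, and so the minimum equals $\theta$ and is achieved precisely on the described solution set.
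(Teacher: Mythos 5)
Your proof is correct, but it takes a different route from the paper, which obtains Corollary~\ref{C-xAxBxgx} simply by setting $\bm{C}=\mathbb{0}$ in Theorem~\ref{T-xAxBxgxCxh} (itself quoted from the cited works without proof here). Your observation that this specialization is formally degenerate is well taken: $\bm{C}=\mathbb{0}$ is not column-regular, and the upper bound $(\bm{h}^{-}\bm{C}(\theta^{-1}\bm{A}\oplus\bm{B})^{\ast})^{-}$ collapses to the conjugate of a zero vector, which has to be reinterpreted as the absence of an upper constraint; the paper glosses over this. Your self-contained derivation instead reruns the parametric machinery that the paper demonstrates in Section~\ref{S-NCOP}: the equivalence $\bm{x}^{-}\bm{A}\bm{x}\leq\theta\iff\theta^{-1}\bm{A}\bm{x}\leq\bm{x}$ via $\bm{x}\bm{x}^{-}\geq\bm{I}$ and $\bm{x}^{-}\bm{x}=\mathbb{1}$, absorption into a single inequality of the form \eqref{I-Axbx}, Theorem~\ref{T-Axbx} for the solution set, and the word expansion of $\mathop\mathrm{Tr}(\theta^{-1}\bm{A}\oplus\bm{B})$ with cyclic invariance of the trace to extract the threshold on $\theta$ (correctly separating the $k=0$ words, killed by $\mathop\mathrm{Tr}(\bm{B})\leq\mathbb{1}$, and the all-zero exponent tuples, which assemble into $\lambda$). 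What your approach buys is a clean, gap-free argument that does not rest on an unproved general theorem and that showcases the paper's method on a simpler instance; what it costs is length, since the specialization is a one-line remark once Theorem~\ref{T-xAxBxgxCxh} is granted. Two trifles worth a word in a final write-up: you should note that any admissible $\theta$ satisfies $\theta\geq\lambda>\mathbb{0}$ so that $\theta^{-1}$ exists, and that a regular $\bm{u}\geq\bm{g}$ always exists, so the optimal solution set is nonempty and the minimum is attained.
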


Under the conditions $\bm{B}=\bm{0}$ and $\bm{C}=\bm{I}$ problem \eqref{P-xAxBxgxCxh} becomes
\begin{equation}
\begin{aligned}
&
\text{minimize}
&&
\bm{x}^{-}\bm{A}\bm{x},
&
\text{subject to}
&&
\bm{g}
\leq
\bm{x}
\leq
\bm{h}.
\end{aligned}
\label{P-xAxgxh}
\end{equation}

\begin{corollary}[\cite{Krivulin2014Constrained}]
\label{C-xAxgxh}
Let $\bm{A}$ be a matrix with spectral radius $\lambda>\mathbb{0}$, and $\bm{h}$ be a regular vector such that $\bm{h}^{-}\bm{g}\leq\bm{1}$. Then, the minimum in \eqref{P-xAxgxh} is equal to
\begin{equation*}
\theta
=
\lambda\oplus\bigoplus_{k=1}^{n}(\bm{h}^{-}\bm{A}^{k}\bm{g})^{1/k},
\end{equation*}
and all solutions are given by $\bm{x}=(\theta^{-1}\bm{A})^{\ast}\bm{u}$, where $\bm{g}\leq\bm{u}\leq(\bm{h}^{-}(\theta^{-1}\bm{A})^{\ast})^{-}$.
\end{corollary}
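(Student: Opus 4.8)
The plan is to derive the corollary directly from Theorem~\ref{T-xAxBxgxCxh} by specializing to $\bm{B}=\mathbb{0}$ and $\bm{C}=\bm{I}$. First I would check that these substitutions turn the constraints of problem \eqref{P-xAxBxgxCxh} into those of \eqref{P-xAxgxh}: since $\bm{B}\bm{x}=\mathbb{0}$, the condition $\bm{B}\bm{x}\oplus\bm{g}\leq\bm{x}$ collapses to $\bm{g}\leq\bm{x}$, while $\bm{C}=\bm{I}$ turns $\bm{C}\bm{x}\leq\bm{h}$ into $\bm{x}\leq\bm{h}$. I would then verify the hypotheses of Theorem~\ref{T-xAxBxgxCxh}: one has $\mathop\mathrm{Tr}(\bm{B})=\mathop\mathrm{Tr}(\mathbb{0})=\mathbb{0}\leq\mathbb{1}$, the matrix $\bm{C}=\bm{I}$ is trivially column-regular, $\bm{h}$ is regular by assumption, and the side condition $\bm{h}^{-}\bm{C}\bm{B}^{\ast}\bm{g}\leq\bm{1}$ reduces to $\bm{h}^{-}\bm{g}\leq\bm{1}$ once we observe that $\bm{B}^{\ast}=\bm{I}$.

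The evaluation of $\bm{B}^{\ast}$ is immediate: with $\bm{B}=\mathbb{0}$ every positive power vanishes, so $\bm{B}^{\ast}=\bm{I}\oplus\bm{B}\oplus\cdots\oplus\bm{B}^{n-1}=\bm{I}$. Feeding this into the solution set of Theorem~\ref{T-xAxBxgxCxh} gives $(\theta^{-1}\bm{A}\oplus\bm{B})^{\ast}=(\theta^{-1}\bm{A})^{\ast}$, and with $\bm{C}=\bm{I}$ the upper bound $(\bm{h}^{-}\bm{C}(\theta^{-1}\bm{A}\oplus\bm{B})^{\ast})^{-}$ becomes $(\bm{h}^{-}(\theta^{-1}\bm{A})^{\ast})^{-}$; these are exactly the bounds claimed, so the description of all solutions follows with no further work.

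The substantive step is the reduction of the minimum value $\theta$. Since $\bm{B}^{i}=\mathbb{0}$ for every $i\geq1$, any summand in the multi-index sum of Theorem~\ref{T-xAxBxgxCxh} in which some $i_{j}\geq1$ contributes $\mathbb{0}$; hence for each $k$ only the term with $i_{0}=i_{1}=\cdots=i_{k}=0$ survives, and there $\bm{B}^{i_{0}}(\bm{A}\bm{B}^{i_{1}}\cdots\bm{A}\bm{B}^{i_{k}})=\bm{A}^{k}$ while $\bm{I}\oplus\bm{g}\bm{h}^{-}\bm{C}=\bm{I}\oplus\bm{g}\bm{h}^{-}$. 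I would then expand $\mathop\mathrm{tr}(\bm{A}^{k}(\bm{I}\oplus\bm{g}\bm{h}^{-}))=\mathop\mathrm{tr}(\bm{A}^{k})\oplus\mathop\mathrm{tr}(\bm{A}^{k}\bm{g}\bm{h}^{-})$ and apply the rank-one trace identity $\mathop\mathrm{tr}(\bm{A}^{k}\bm{g}\bm{h}^{-})=\bm{h}^{-}\bm{A}^{k}\bm{g}$, which holds because $(\bm{A}^{k}\bm{g}\bm{h}^{-})_{ii}=(\bm{A}^{k}\bm{g})_{i}h_{i}^{-1}$ and summing these diagonal entries over $i$ reproduces $\bm{h}^{-}\bm{A}^{k}\bm{g}$.

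Finally, as the $1/k$-power is an order isomorphism on the radicable semifield it distributes over idempotent addition, so $\mathop\mathrm{tr}\nolimits^{1/k}(\bm{A}^{k}(\bm{I}\oplus\bm{g}\bm{h}^{-}))=\mathop\mathrm{tr}\nolimits^{1/k}(\bm{A}^{k})\oplus(\bm{h}^{-}\bm{A}^{k}\bm{g})^{1/k}$. Collecting terms over $k=1,\ldots,n$ and identifying $\bigoplus_{k=1}^{n}\mathop\mathrm{tr}\nolimits^{1/k}(\bm{A}^{k})=\lambda$ from the spectral-radius formula of Section~\ref{S-PDR} yields $\theta=\lambda\oplus\bigoplus_{k=1}^{n}(\bm{h}^{-}\bm{A}^{k}\bm{g})^{1/k}$, as stated. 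I expect the only points needing care to be the combinatorial verification that all mixed multi-index contributions vanish and the rank-one trace identity; the remaining manipulations are formal substitutions into Theorem~\ref{T-xAxBxgxCxh}.
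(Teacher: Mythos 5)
Your proposal is correct and matches the paper's own route: the corollary is presented there precisely as the specialization $\bm{B}=\mathbb{0}$, $\bm{C}=\bm{I}$ of Theorem~\ref{T-xAxBxgxCxh}, and your verification of the hypotheses, the collapse of the multi-index sum to the all-zero term, the identity $\mathop\mathrm{tr}(\bm{A}^{k}\bm{g}\bm{h}^{-})=\bm{h}^{-}\bm{A}^{k}\bm{g}$, and the identification of $\bigoplus_{k=1}^{n}\mathop\mathrm{tr}\nolimits^{1/k}(\bm{A}^{k})$ with $\lambda$ supply exactly the details the paper leaves implicit.
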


The next problem combines the special case of the objective function in problem \eqref{P-xAxxpqxc} with the constraints in \eqref{P-xAxBxgx}. Given matrices $\bm{A},\bm{B}\in\mathbb{X}^{n\times n}$ and vectors $\bm{p},\bm{g}\in\mathbb{X}^{n}$, the problem is to obtain regular vectors $\bm{x}\in\mathbb{X}^{n}$ that
\begin{equation}
\begin{aligned}
&
\text{minimize}
&&
\bm{x}^{-}\bm{A}\bm{x}\oplus\bm{x}^{-}\bm{p},
&
\text{subject to}
&&
\bm{B}\bm{x}\oplus\bm{g}
\leq
\bm{x}.
\end{aligned}
\label{P-xAxxpBxgx}
\end{equation}

\begin{theorem}[\cite{Krivulin2015Extremal}]
\label{T-xAxxpBxgx}
Let $\bm{A}$ be a matrix with spectral radius $\lambda>\mathbb{0}$, and $\bm{B}$ be a matrix with $\mathop\mathrm{Tr}(\bm{B})\leq\bm{1}$. Then, the minimum in \eqref{P-xAxxpBxgx} is equal to
\begin{equation*}
\theta
=
\lambda
\oplus
\bigoplus_{k=1}^{n-1}\mathop{\bigoplus\hspace{1.2em}}_{1\leq i_{1}+
\cdots+i_{k}\leq n-k}\mathop\mathrm{tr}\nolimits^{1/k}(\bm{A}\bm{B}^{i_{1}}\cdots\bm{A}\bm{B}^{i_{k}}),
\end{equation*}
and all solutions are given by $\bm{x}=(\theta^{-1}\bm{A}\oplus\bm{B})^{\ast}\bm{u}$, where $\bm{u}\geq\theta^{-1}\bm{p}\oplus\bm{g}$.
\end{theorem}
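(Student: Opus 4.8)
The plan is to follow the algebraic scheme outlined above: introduce a scalar parameter $\theta$ for the minimum of the objective function, recast the requirement that the objective not exceed $\theta$ together with the feasibility constraint as a single inequality of the type \eqref{I-Axbx}, and then read off both the value of $\theta$ and the complete solution set from Theorem~\ref{T-Axbx}.

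First I would denote by $\theta$ the minimum value sought and examine, for a regular vector $\bm{x}$, the system consisting of $\bm{x}^{-}\bm{A}\bm{x}\oplus\bm{x}^{-}\bm{p}\leq\theta$ together with the constraint $\bm{B}\bm{x}\oplus\bm{g}\leq\bm{x}$. The first relation splits into $\bm{x}^{-}\bm{A}\bm{x}\leq\theta$ and $\bm{x}^{-}\bm{p}\leq\theta$. Using the identity $\bm{x}^{-}\bm{x}=\mathbb{1}$ and the inequality $\bm{x}\bm{x}^{-}\geq\bm{I}$ valid for regular $\bm{x}$, I would show that $\bm{x}^{-}\bm{A}\bm{x}\leq\theta$ is equivalent to $\theta^{-1}\bm{A}\bm{x}\leq\bm{x}$ (multiply on the left by $\bm{x}$ and use $\bm{x}\bm{x}^{-}\geq\bm{I}$ for one direction, multiply on the left by $\bm{x}^{-}$ and use $\bm{x}^{-}\bm{x}=\mathbb{1}$ for the other). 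Likewise $\bm{x}^{-}\bm{p}\leq\theta$ is equivalent, entry-wise, to $\theta^{-1}\bm{p}\leq\bm{x}$. Combining these two equivalences with the constraint, the whole system collapses to the single inequality
\[
(\theta^{-1}\bm{A}\oplus\bm{B})\bm{x}\oplus(\theta^{-1}\bm{p}\oplus\bm{g})\leq\bm{x},
\]
which has exactly the form \eqref{I-Axbx} with matrix $\theta^{-1}\bm{A}\oplus\bm{B}$ and vector $\theta^{-1}\bm{p}\oplus\bm{g}$.

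Next I would invoke Theorem~\ref{T-Axbx}. Since $\lambda>\mathbb{0}$ forces $\theta\geq\lambda>\mathbb{0}$, the scalar $\theta^{-1}$ is well defined. The theorem states that this inequality has regular solutions precisely when $\mathop\mathrm{Tr}(\theta^{-1}\bm{A}\oplus\bm{B})\leq\mathbb{1}$, and that in this case all solutions are $\bm{x}=(\theta^{-1}\bm{A}\oplus\bm{B})^{\ast}\bm{u}$ with $\bm{u}\geq\theta^{-1}\bm{p}\oplus\bm{g}$, which is already the announced solution form. Consequently the minimum $\theta$ is the least value of the parameter for which the existence condition $\mathop\mathrm{Tr}(\theta^{-1}\bm{A}\oplus\bm{B})\leq\mathbb{1}$ holds; because this condition is a non-strict inequality, the least such value is genuinely attained, and is therefore the minimum rather than merely an infimum.

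The main work is then to solve the scalar condition $\mathop\mathrm{Tr}(\theta^{-1}\bm{A}\oplus\bm{B})\leq\mathbb{1}$ for $\theta$, and this is where I expect the real difficulty. Expanding $\mathop\mathrm{Tr}(\theta^{-1}\bm{A}\oplus\bm{B})=\bigoplus_{k=1}^{n}\mathop\mathrm{tr}((\theta^{-1}\bm{A}\oplus\bm{B})^{k})$, each power is a tropical sum of all length-$k$ words in the two letters $\theta^{-1}\bm{A}$ and $\bm{B}$; a word with $j$ occurrences of $\bm{A}$ carries the scalar factor $\theta^{-j}$, and by the cyclic invariance of the trace its contribution can be normalized to the canonical form $\mathop\mathrm{tr}(\bm{A}\bm{B}^{i_{1}}\cdots\bm{A}\bm{B}^{i_{j}})$ with $i_{1}+\cdots+i_{j}=k-j$. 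The terms with $j=0$ are exactly $\mathop\mathrm{tr}(\bm{B}^{k})$ and are harmless, being $\leq\mathbb{1}$ by the hypothesis $\mathop\mathrm{Tr}(\bm{B})\leq\mathbb{1}$; for $j\geq1$ the requirement $\theta^{-j}\mathop\mathrm{tr}(\bm{A}\bm{B}^{i_{1}}\cdots\bm{A}\bm{B}^{i_{j}})\leq\mathbb{1}$ is equivalent to $\mathop\mathrm{tr}\nolimits^{1/j}(\bm{A}\bm{B}^{i_{1}}\cdots\bm{A}\bm{B}^{i_{j}})\leq\theta$, so the least admissible $\theta$ is the tropical sum of all these roots. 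Isolating the terms with all $i_{l}=0$, which produce $\mathop\mathrm{tr}\nolimits^{1/j}(\bm{A}^{j})$ and together constitute the spectral radius $\lambda=\bigoplus_{j=1}^{n}\mathop\mathrm{tr}\nolimits^{1/j}(\bm{A}^{j})$, and observing that $\mathop\mathrm{Tr}$ involves only powers up to $n$ so that $j+(i_{1}+\cdots+i_{j})\leq n$ always (whence for $j=n$ only the all-zero choice fits), leaves precisely the stated double sum over $1\leq k\leq n-1$ and $1\leq i_{1}+\cdots+i_{k}\leq n-k$. The careful bookkeeping of which words and index ranges survive the normalization is the delicate step; once it is done, assembling the pieces yields the formula for $\theta$, and Theorem~\ref{T-Axbx} supplies the complete solution set at this optimal value.
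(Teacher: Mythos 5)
Your proposal is correct and follows essentially the same scheme the paper uses for this family of problems (the paper itself only cites Theorem~\ref{T-xAxxpBxgx} from \cite{Krivulin2013Extremal}, but its template is spelled out in the proof of the new theorem in Section~\ref{S-NCOP}): parameterize the minimum, collapse the objective bound and the constraint into a single inequality of the form \eqref{I-Axbx}, apply Theorem~\ref{T-Axbx}, and extract $\theta$ from the existence condition $\mathop\mathrm{Tr}(\theta^{-1}\bm{A}\oplus\bm{B})\leq\mathbb{1}$ via the trace expansion over words in $\bm{A}$ and $\bm{B}$. Your combinatorial bookkeeping (cyclic normalization of the trace, discarding the pure-$\bm{B}$ words by the hypothesis $\mathop\mathrm{Tr}(\bm{B})\leq\mathbb{1}$, and isolating the all-zero-exponent words as $\lambda$) reproduces the stated formula exactly.
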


Another extended problem that has an objective function like that in \eqref{P-xAxxpqxc} and boundary constraints as in \eqref{P-xAxgxh} is examined in the next section.

\section{New Constrained Optimization Problem}
\label{S-NCOP}

This section includes a direct, complete solution to a new constrained optimization problem. We follow the approach developed in \cite{Krivulin2015Multidimensional,Krivulin2014Complete,Krivulin2014Constrained} to introduce an additional variable, which represents the minimum value of the objective function, and then to reduce the problem to solving a parametrized inequality. 

Suppose that, given a matrix $\bm{A}\in\mathbb{X}^{n\times n}$, vectors $\bm{p},\bm{q},\bm{g},\bm{h}\in\mathbb{X}^{n}$, and a scalar $r\in\mathbb{X}$, we need to find regular vectors $\bm{x}\in\mathbb{X}^{n}$ that solve the problem
\begin{equation}
\begin{aligned}
&
\text{minimize}
&&
\bm{x}^{-}\bm{A}\bm{x}\oplus\bm{x}^{-}\bm{p}\oplus\bm{q}^{-}\bm{x}\oplus r,
&
\text{subject to}
&&
\bm{g}
\leq
\bm{x}
\leq
\bm{h}.
\end{aligned}
\label{P-xAxxpqxr-gxh}
\end{equation}

\begin{theorem}
Let $\bm{A}$ be a matrix with spectral radius $\lambda>\mathbb{0}$, $\bm{q}$ and $\bm{h}$ be regular vectors, and $\bm{h}^{-}\bm{g}\leq\mathbb{1}$. Then, the minimum in \eqref{P-xAxxpqxr-gxh} is equal to
\begin{multline*}
\mu
=
\lambda
\oplus
\bigoplus_{m=0}^{n-1}
(\bm{q}^{-}\bm{A}^{m}\bm{p})^{1/(m+2)}
\oplus
\bigoplus_{m=0}^{n-1}
(\bm{q}^{-}\bm{A}^{m}\bm{g}\oplus\bm{h}^{-}\bm{A}^{m}\bm{p})^{1/(m+1)}
\\
\oplus
\bigoplus_{m=1}^{n-1}
(\bm{h}^{-}\bm{A}^{m}\bm{g})^{1/m}
\oplus
r.
\end{multline*}
and all regular solutions are given by $\bm{x}=(\mu^{-1}\bm{A})^{\ast}\bm{u}$, where the vector $\bm{u}$ satisfies the condition $\mu^{-1}\bm{p}\oplus\bm{g}\leq\bm{u}\leq((\mu^{-1}\bm{q}^{-}\oplus\bm{h}^{-})(\mu^{-1}\bm{A})^{\ast})^{-}$.
\end{theorem}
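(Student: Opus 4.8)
The plan is to introduce a scalar parameter $\mu$ representing the value of the objective function and to reduce the constrained minimization to the question of when the system
$$
\bm{x}^{-}\bm{A}\bm{x}\oplus\bm{x}^{-}\bm{p}\oplus\bm{q}^{-}\bm{x}\oplus r\leq\mu,
\qquad
\bm{g}\leq\bm{x}\leq\bm{h}
$$
admits a regular solution $\bm{x}$. Since $\oplus$ is the join of the order, the first inequality is equivalent to the four conditions $\bm{x}^{-}\bm{A}\bm{x}\leq\mu$, $\bm{x}^{-}\bm{p}\leq\mu$, $\bm{q}^{-}\bm{x}\leq\mu$, and $r\leq\mu$. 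A short computation in coordinates (using regularity of $\bm{x}$) turns these into, respectively, $\mu^{-1}\bm{A}\bm{x}\leq\bm{x}$, $\mu^{-1}\bm{p}\leq\bm{x}$, $\bm{x}\leq\mu\bm{q}$, and the scalar bound $\mu\geq r$. Together with the box constraint, I would group them into a lower block $\mu^{-1}\bm{A}\bm{x}\oplus\mu^{-1}\bm{p}\oplus\bm{g}\leq\bm{x}$ and an upper block $\bm{x}\leq\mu\bm{q}$, $\bm{x}\leq\bm{h}$.

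Next I would apply Theorem~\ref{T-Axbx} to the lower block, reading it as $\bm{B}\bm{x}\oplus\bm{b}\leq\bm{x}$ with $\bm{B}=\mu^{-1}\bm{A}$ and $\bm{b}=\mu^{-1}\bm{p}\oplus\bm{g}$. The existence condition $\mathop\mathrm{Tr}(\mu^{-1}\bm{A})\leq\mathbb{1}$ reduces, upon extracting the factor $\mu^{-m}$ from each $\mathop\mathrm{tr}((\mu^{-1}\bm{A})^{m})$, to $\mu\geq\lambda$; and all regular solutions are $\bm{x}=(\mu^{-1}\bm{A})^{\ast}\bm{u}$ with $\bm{u}\geq\mu^{-1}\bm{p}\oplus\bm{g}$. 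Substituting this parametrization into the two upper-block inequalities and solving each by Lemma~\ref{L-Axd}, I would obtain $\bm{u}\leq(\mu^{-1}\bm{q}^{-}(\mu^{-1}\bm{A})^{\ast})^{-}$ and $\bm{u}\leq(\bm{h}^{-}(\mu^{-1}\bm{A})^{\ast})^{-}$. These merge into the single bound in the statement by the identity $(\bm{a}\oplus\bm{b})^{-}=\bm{a}^{-}\wedge\bm{b}^{-}$ applied to the rows $\bm{a}=\mu^{-1}\bm{q}^{-}(\mu^{-1}\bm{A})^{\ast}$ and $\bm{b}=\bm{h}^{-}(\mu^{-1}\bm{A})^{\ast}$, giving $\bm{u}\leq((\mu^{-1}\bm{q}^{-}\oplus\bm{h}^{-})(\mu^{-1}\bm{A})^{\ast})^{-}$.

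At this point a regular $\bm{u}$, hence a feasible $\bm{x}$, exists precisely when the lower bound does not exceed the upper bound. Using the elementary equivalence $\bm{v}\leq\bm{c}^{-}\Leftrightarrow\bm{c}\bm{v}\leq\mathbb{1}$ for a row $\bm{c}$, this feasibility condition becomes the scalar inequality $(\mu^{-1}\bm{q}^{-}\oplus\bm{h}^{-})(\mu^{-1}\bm{A})^{\ast}(\mu^{-1}\bm{p}\oplus\bm{g})\leq\mathbb{1}$. I would then expand the Kleene star as $(\mu^{-1}\bm{A})^{\ast}=\bigoplus_{m=0}^{n-1}\mu^{-m}\bm{A}^{m}$ and multiply out. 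Collecting the powers of $\mu$, the left-hand side splits into three families carrying $\mu^{-(m+2)}\bm{q}^{-}\bm{A}^{m}\bm{p}$, $\mu^{-(m+1)}(\bm{q}^{-}\bm{A}^{m}\bm{g}\oplus\bm{h}^{-}\bm{A}^{m}\bm{p})$, and $\mu^{-m}\bm{h}^{-}\bm{A}^{m}\bm{g}$. Requiring each to be at most $\mathbb{1}$ and taking roots yields the three sums in the formula for $\mu$; the $m=0$ term of the last family is exactly $\bm{h}^{-}\bm{g}\leq\mathbb{1}$, absorbed by the hypothesis, so that family starts at $m=1$. Combining these with $\mu\geq\lambda$ and $\mu\geq r$, the least $\mu$ for which the system is solvable is the claimed join of all these quantities, and at that value the solution set for $\bm{u}$ is nonempty, which simultaneously shows that $\mu$ is attained and that the displayed expression describes all solutions.

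I expect the main obstacle to be the bookkeeping in the final expansion: matching each product of a row factor ($\mu^{-1}\bm{q}^{-}$ or $\bm{h}^{-}$), a term $\mu^{-m}\bm{A}^{m}$, and a column factor ($\mu^{-1}\bm{p}$ or $\bm{g}$) to the correct exponent of $\mu$, and hence to the correct root $1/(m+2)$, $1/(m+1)$, or $1/m$, while verifying that the degenerate $m=0$ case of the $\bm{h}^{-}\bm{A}^{m}\bm{g}$ family is precisely the standing hypothesis rather than a further constraint on $\mu$.
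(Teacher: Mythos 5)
Your proposal is correct and follows essentially the same route as the paper: parametrize by the objective value $\mu$, reduce to the double inequality for $\bm{x}$, solve the lower block by Theorem~\ref{T-Axbx} and the upper block by Lemma~\ref{L-Axd}, and extract the bounds on $\mu$ from the nonemptiness condition on $\bm{u}$. The only cosmetic difference is that you obtain $\mu\geq\lambda$ from the existence condition $\mathop\mathrm{Tr}(\mu^{-1}\bm{A})\leq\mathbb{1}$ rather than from the extremal property of the spectral radius as the paper does; the two are equivalent.
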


\begin{proof}
Denote the minimum value of the objective function over all regular $\bm{x}$ by $\mu$. Then, all regular solutions to problem \eqref{P-xAxxpqxr-gxh} are given by the system
\begin{equation}
\bm{x}^{-}\bm{A}\bm{x}\oplus\bm{x}^{-}\bm{p}\oplus\bm{q}^{-}\bm{x}\oplus r
\leq
\mu,
\qquad
\bm{g}
\leq
\bm{x}
\leq
\bm{h}.
\label{I-xAxxpqxrmu-gxh}
\end{equation}

The first inequality at \eqref{I-xAxxpqxrmu-gxh} is equivalent to the system of inequalities
$$
\bm{x}^{-}\bm{A}\bm{x}
\leq
\mu,
\qquad
\bm{x}^{-}\bm{p}
\leq
\mu,
\qquad
\bm{q}^{-}\bm{x}
\leq
\mu,
\qquad
r
\leq
\mu.
$$

Note that, by Lemma~\ref{L-minxAx}, we have $\mu\geq\bm{x}^{-}\bm{A}\bm{x}\geq\lambda$. Furthermore, after multiplication of the second and third inequalities, and application of properties of conjugate transposition, we obtain $\bm{q}^{-}\bm{p}\leq\bm{q}^{-}\bm{x}\bm{x}^{-}\bm{p}\leq\mu^{2}$, and thus $\mu\geq(\bm{q}^{-}\bm{p})^{1/2}$. With the forth inequality, we derive the lower bound for $\mu$
\begin{equation}
\mu
\geq
\lambda\oplus(\bm{q}^{-}\bm{p})^{1/2}\oplus r.
\label{I-mulambdaqpr}
\end{equation}

Furthermore, applications of Lemma~\ref{I-Axd} to the first three inequalities, followed by multiplication by $\mu^{-1}$, gives
$$
\mu^{-1}\bm{A}\bm{x}
\leq
\bm{x},
\qquad
\mu^{-1}\bm{p}
\leq
\bm{x},
\qquad
\bm{x}
\leq
\mu\bm{q}.
$$

Finally, we combine the first two inequalities with the left boundary constraint $\bm{g}\leq\bm{x}$, and then the third inequality with the right boundary $\bm{x}\leq\bm{h}$ to rewrite the system at \eqref{I-xAxxpqxrmu-gxh} as the double inequality
\begin{equation*}
\mu^{-1}\bm{A}\bm{x}\oplus\mu^{-1}\bm{p}\oplus\bm{g}
\leq
\bm{x}
\leq
(\mu^{-1}\bm{q}^{-}\oplus\bm{h}^{-})^{-}.
\end{equation*}

To solve the left inequality, we need to apply Theorem~\ref{T-Axbx}. First, we verify that $\mathop\mathrm{Tr}(\mu^{-1}\bm{A})\leq\mathop\mathrm{Tr}(\lambda^{-1}\bm{A})=\lambda^{-1}\mathop\mathrm{tr}\bm{A}\oplus\cdots\oplus\lambda^{-n}\mathop\mathrm{tr}\bm{A}^{n}\leq\mathbb{1}$. It follows from the theorem that the left inequality has regular solutions, all given by $\bm{x}=(\mu^{-1}\bm{A})^{\ast}\bm{u}$, where $\bm{u}$ is a regular vector such that $\bm{u}\geq\mu^{-1}\bm{p}\oplus\bm{g}$.

Substitution of the solution into the right inequality leads to the inequality $(\mu^{-1}\bm{A})^{\ast}\bm{u}\leq(\mu^{-1}\bm{q}^{-}\oplus\bm{h}^{-})^{-}$, which is completely solved with respect to $\bm{u}$ by Lemma~\ref{I-Axd} in the form $\bm{u}\leq((\mu^{-1}\bm{q}^{-}\oplus\bm{h}^{-})(\mu^{-1}\bm{A})^{\ast})^{-}$.

By coupling both left and right boundary conditions, we write the double inequality $\mu^{-1}\bm{p}\oplus\bm{g}\leq\bm{u}\leq((\mu^{-1}\bm{q}^{-}\oplus\bm{h}^{-})(\mu^{-1}\bm{A})^{\ast})^{-}$, which determines a nonempty set if and only if $\mu^{-1}\bm{p}\oplus\bm{g}\leq((\mu^{-1}\bm{q}^{-}\oplus\bm{h}^{-})(\mu^{-1}\bm{A})^{\ast})^{-}$.

Using properties of conjugate transposition, it is easy to verify that the last inequality is equivalent to that in the form
$$
(\mu^{-1}\bm{q}^{-}\oplus\bm{h}^{-})(\mu^{-1}\bm{A})^{\ast}(\mu^{-1}\bm{p}\oplus\bm{g})
\leq
\mathbb{1}.
$$

By simple algebra, we reduce the last inequality to the system of inequalities
\begin{gather*}
\mu^{-2}\bm{q}^{-}(\mu^{-1}\bm{A})^{\ast}\bm{p}
\leq
\mathbb{1},
\qquad
\mu^{-1}(\bm{q}^{-}(\mu^{-1}\bm{A})^{\ast}\bm{g}
\oplus
\bm{h}^{-}(\mu^{-1}\bm{A})^{\ast}\bm{p})
\leq
\mathbb{1},
\\
\bm{h}^{-}(\mu^{-1}\bm{A})^{\ast}\bm{g}
\leq
\mathbb{1}.
\end{gather*}

The replacement of the asterate by its expanded form yields
\begin{gather*}
\bigoplus_{m=0}^{n-1}\mu^{-m-2}\bm{q}^{-}\bm{A}^{m}\bm{p}
\leq
\mathbb{1},
\qquad
\bigoplus_{m=0}^{n-1}\mu^{-m-1}(\bm{q}^{-}\bm{A}^{m}\bm{g}\oplus\bm{h}^{-}\bm{A}^{m}\bm{p})
\leq
\mathbb{1},
\\
\bm{h}^{-}\bm{g}
\oplus
\bigoplus_{m=1}^{n-1}\mu^{-m}\bm{h}^{-}\bm{A}^{m}\bm{g}
\leq
\mathbb{1}.
\end{gather*}

Considering that $\bm{h}^{-}\bm{g}\leq\mathbb{1}$ by the conditions of the theorem, the last inequalities are equivalent to the system
\begin{gather*}
\mu^{-m-2}\bm{q}^{-}\bm{A}^{m}\bm{p}
\leq
\mathbb{1},
\quad
\mu^{-m-1}(\bm{q}^{-}\bm{A}^{m}\bm{g}\oplus\bm{h}^{-}\bm{A}^{m}\bm{p})
\leq
\mathbb{1},
\quad
m=0,\ldots,n-1;
\\
\mu^{-m}\bm{h}^{-}\bm{A}^{m}\bm{g}
\leq
\mathbb{1},
\quad
m=1,\ldots,n-1;
\end{gather*}
which can be solved with respect to $\mu$ in the form
\begin{gather*}
\mu
\geq
(\bm{q}^{-}\bm{A}^{m}\bm{p})^{1/(m+2)},
\quad
\mu
\geq
(\bm{q}^{-}\bm{A}^{m}\bm{g}\oplus\bm{h}^{-}\bm{A}^{m}\bm{p})^{1/(m+1)},
\quad
m=0,\ldots,n-1;
\\
\mu
\geq
(\bm{h}^{-}\bm{A}^{m}\bm{g})^{1/m},
\quad
m=1,\ldots,n-1.
\end{gather*}

Furthermore, we combine these solution into one inequality
$$
\mu
\geq
\bigoplus_{m=0}^{n-1}
(\bm{q}^{-}\bm{A}^{m}\bm{p})^{1/(m+2)}
\oplus
\bigoplus_{m=0}^{n-1}
(\bm{q}^{-}\bm{A}^{m}\bm{g}\oplus\bm{h}^{-}\bm{A}^{m}\bm{p})^{1/(m+1)}
\oplus
\bigoplus_{m=1}^{n-1}
(\bm{h}^{-}\bm{A}^{m}\bm{g})^{1/m}.
$$

We add the bound at \eqref{I-mulambdaqpr}, and then replace the obtained inequality by equality to provide the minimum value of $\mu$, and thus completes the proof. 
\end{proof}

\bibliographystyle{abbrvurl}

\bibliography{Algebraic_solutions_of_tropical_optimization_problems}

\end{document}